\def\H_0{\mathcal{H}_0(T)}
\def\X{{\mathcal X}}
\def\ind{{\textrm{ind}}}
\def\ind{\textrm{ind}}
\def\X{{\cal X}}
\def\Y{{\cal Y}}
\def\H{{\cal H}}
\newtheorem{df}{Definition}[section]
\newtheorem{thm}[df]{Theorem}
\newtheorem{pro}[df]{Proposition}
\newtheorem{rema}[df] {Remark}
\newtheorem{lem}[df] {Lemma}
\def\sfstp{{\hskip-1em}{\bf.}{\hskip1em}}
\def\enddemo{\qed \endtrivlist}
\let\csname enddemo*\endcsname=\enddemo
\def\qedsymbol{\ifmmode\bgroup\else$\bgroup\aftergroup$\fi
  \vcenter{\hrule\hbox{\vrule
height.6em\kern.6em\vrule}\hrule}\egroup}
\def\qed{\ifmmode\else\unskip\nobreak\fi\quad\qedsymbol}
\def\subject#1{\renewcommand{\thefootnote}{}
\footnote{\noindent *Corresponding author. {#1}}}
\begin{document}

\pagestyle{myheadings} \markboth{  }{ \hskip5truecm \rm E. Boasso and M. Amouch  }
\title
{ \bf Generalized Browder's and Weyl's \\ Theorems for 
Generalized Derivations \/}

\author {\normalsize {Enrico  $\hbox{\rm Boasso}^*$ and  Mohamed  Amouch}}

\date{   }

\maketitle\setlength{\baselineskip}{12pt}

\subject{}

\begin{abstract}\noindent Given Banach spaces $\X$ and $\Y$  
and Banach space operators $A\in L(\X)$ and $B\in L(\Y)$,
let $\rho\colon L(\Y,\X)\to L(\Y,\X)$ denote  the generalized derivation
defined by $A$ and $B$, i.e., $\rho (U)=AU-UB$ ($U\in L(\Y,\X)$). 
The main objective of this article is to study Weyl
and  Browder type theorems for $\rho\in L(L(\Y,\X))$. To this end, however,
first the isolated points of the spectrum and the Drazin spectrum of 
$\rho\in L(L(\Y,\X))$ need to be characterized. In addition,
it will be also proved that if $A$ and $B$ are polaroid (respectively 
isoloid), then $\rho$ is polaroid (respectively isoloid).\par
\vskip.2truecm
\noindent {\bf Mathematics Subject Classification (2010).} Primary 47A10, 47B47; Secondary 47A05. \par
\vskip.2truecm
\noindent {\bf Keywords.} Generalized Browder's theorem, generalized Weyl's theorem,
generalized derivation, Drazin spectrum, polaroid operator, isoloid operator, Banach space.
\end{abstract}

\section {\sfstp Introduction}\setcounter{df}{0}
\
\indent In the recent past several authors have studied Weyl and Browder type theorems
and the condition of being polaroid or isoloid for  tensor product and  elementary operators; 
see for example \cite{AD, BDu, BDJ, Du10, DDK, DHK, HK, KD, SK}. Concerning the above mentioned research area,
generalized derivations have been studied mainly for particular classes of operators defined on 
Hilbert spaces; see \cite{CDD, CDDY,Du0, Du110, DK, DK2, F, FB}.  \par

\indent The main objective of this article is to study Weyl and Browder type theorems for generalized derivations in the
context of Banach spaces. In fact, given  two Banach spaces $\X$ and $\Y$, two Banach space operators
$A\in L(\X)$ and $B\in L(\Y)$ and $\rho\in L(L(\Y,\X))$  the generalized derivation defined by $A$ and $B$,
using an approach similar to the one in \cite{BDJ, Du10, DDK},
in section 4 the problem of transferring (generalized) Browder's theorem from $A$ and $B$ and
(generalized) $a$-Browder's theorem from $A$ and $B^*$ to $\rho$ will be studied. Furthermore,
in section 5, when $A$ and $B^*$ are isoloid (respectively $a$-isoloid) operators satisfying generalized Weyl's
(respectively generalized $a$-Weyl's) theorem, necessary and sufficient conditions for
$\rho$ to satisfy  generalized Weyl's (respectively generalized $a$-Weyl's) theorem
will be given; what is more, Weyl's and $a$-Weyl's theorems will be also studied.  In addition, it will be proved that the condition of being polaroid (respectively isoloid) transfers from $A$ and $B$ 
(respectively from $A$ and $B^*$) to $\rho$.\par
\indent However, to this end, in section 3, after having recalled some preliminary definitions and facts in section 2,
the isolated points of the spectrum and the Drazin spectrum of $\rho$ will be fully characterized.

\section {\sfstp Preliminary definitions and facts}\setcounter{df}{0}
\
\indent From now on $\X$ and $\Y$ will denote 
infinite dimensional complex Banach spaces and $L(\Y, \X)$ the algebra of
all bounded linear maps defined on $\Y$ and with values in $\X$; as usual,
$L(\X)=L(\X,\X)$. Given $A\in L(\Y,\X)$, $N(A)\subseteq \Y$ and $R(A)\subseteq \X$ will stand for the null space and the
range of $A$, respectively. In addition, $\X^*$ will denote the dual 
space of $\X$ while $A^*\in L(\X^*)$ will stand for
the adjoint operator of $A\in L(\X)$. Recall that $A\in L(\X)$ is said to be
\it bounded below\rm, if $N(A)=0$ and $R(A)$ is closed. Denote
the \it approximate point spectrum \rm of $A$ by
$\sigma_a(A)=\{\lambda\in \mathbb C \colon A-\lambda \hbox{ is
not bounded below} \}$, where $A-\lambda$ stands for $A-\lambda
I$, $I$ the identity map of $L(\X)$. Let $\sigma(A)$ denote the spectrum of $A$. \par

\indent Recall that $A\in L(\X)$  is said to be  a
\it Fredholm \rm operator if $\alpha(A)=\dim N(A)$ and $\beta(A)=\dim \X/R(A)$
are finite dimensional, in which case its \it index \rm is given by
$$
\ind(A)=\alpha(A)-\beta (A).
$$
If $R(A)$ is closed and $\alpha (A)$ is finite,
then $A\in  L(\X)$ is said to be  \it upper \rm semi-Fredholm\rm,
while if $\alpha (A)$ and $\beta (A)$ are finite and equal, so that the index is zero,
$A$ is said to be a \it Weyl \rm operator.\
These classes of operators generate the
Fredholm or essential spectrum, the upper semi-Fredholm spectrum and the Weyl spectra of $A\in L(\X)$, which will be denoted by
 $\sigma_e(A)$, $\sigma_{SF_+}(A)$ and $\sigma_w(A)$,
 respectively.  
On the other hand, $\Phi(A)$ and $\Phi_+(A)$ will denote the complement
in $\mathbb C$ of the Fredholm spectrum and of the upper semi-Fredholm spectrum of $A$,
respectively.
\par

\indent In addition, the \it Weyl essential approximate point spectrum
\rm of $A\in L(\X)$ is the set $\sigma_{aw}(A)=\{\lambda\in \sigma_a(A)\colon
A-\lambda\hbox{ is not upper semi-Fredholm or } 0<\ind (A-\lambda) \}$,
see \cite{R2}. \par

\indent Recall that the concept of Fredholm operator
has  been generalized. 
An operator $A\in L(\X)$ will be said to be
 \it B-Fredholm\rm, if there exists
$n\in\mathbb N$ for which $R(A^n)$ is closed and the
induced operator $A_n\in L(R(A^n))$ is Fredholm. In a similar way
it is possible to define  upper B-Fredholm operators. Note that if for some $n\in\mathbb N$,
$A_n\in L(R(A^n))$ is Fredholm, then $A_m\in L(R(A^m))$ is
Fredholm for all $m\ge n$; moreover $\ind (A_n)=\ind (A_m)$, for
all $m\ge n$. Therefore, it makes sense to define the index of
$A$ by $\ind (A)=\ind (A_n)$. Recall that $A$ is said to be  \it B-Weyl\rm, if $A$ is  B-Fredholm and
$\ind(A)=0$. Naturally, from this class of operators 
the B-Weyl spectrum of $A\in L(\X)$ can be derived,
which will be denoted by $\sigma_ {BW}(A)$. In addition, set $\sigma_ {SBF_+^-}(A)= \{\lambda\in
\mathbb C\colon A-\lambda  \hbox{ is not upper semi B-Fredholm or
} 0<\ind (A-\lambda)\}$, see \cite{B6}.\par

\indent On the other hand, the \it ascent \rm  (respectively \it descent\rm ) of
$A\in L(\X)$ is the smallest non-negative integer $a$
(respectively $d$) such that $N(A^a)=N(A^{a+1})$ (respectively
$R(A^d)=R(A^{d+1})$); if such an integer does not exist, then
$asc(A)=\infty$ (respectively $dsc(A)=\infty$). The
operator $A$ will be said to be \it Browder\rm,
if it is Fredholm and its ascent and descent are finite. Then,
the Browder spectrum of $A\in L(\X)$ is the set
$\sigma_b(A)=\{\lambda\in \mathbb C\colon A-\lambda\hbox{ is not Browder}\}$. It is well known that
$$
\sigma_e(A)\subseteq \sigma_w(A)\subseteq \sigma_b(A)= \sigma_e(A)\cup acc\hskip .1truecm\sigma (A),
$$
where if $K\subseteq \mathbb C$, then acc $K$ denotes the limit points of $K$
while iso $K$ stands for the isolated points of $K$, i.e.,  iso $K$=$K\setminus$ acc $K$.

\indent In addition, the \it Browder essential  approximate point
spectrum \rm of $A\in L(\X)$ is the set
$\sigma_{ab}(A) =\{\lambda\in \sigma_a(A)\colon \lambda\in \sigma_{aw}(A)
 \hbox{ or } asc(A-\lambda)=\infty\}$, see
 \cite{R2}. It is clear that
$\sigma_{aw}(A)\subseteq \sigma_{ab}(A)\subseteq \sigma_a(A)$.\par

\indent Recall that a Banach space operator $A\in L(\X)$ is said to be Drazin invertible,
if there exists a necessarily unique $B\in L(\X)$ and some $m\in \mathbb N$ such that
$$
A^m=A^mBA, \hskip.3truecm BAB=B, \hskip.3truecm AB=BA,
$$
see for example \cite{D, K}. If $DR(L(\X))=\{ A\in L(\X)\colon
A\hbox{ is Drazin invertible} \}$, then the Drazin spectrum of
$A\in L(\X)$ is the set $\sigma_{DR}(A)=\{\lambda\in C\colon
A-\lambda\notin DR(L(\X)) \}$, see \cite{BS, Bo3}.  It is well known 
that if $A\in L(\X)$ is Drazin invertible, then there is $k\in\mathbb N$ 
such that $\X=R(A^k)\oplus N(A^k)$ and $A_k\in L(R(A^k))$ is invertible, see for example \cite{K}.
In particular, $\sigma_{BW}(A)\subseteq \sigma_{DR}(A)\subseteq \sigma(A)$.
\par

\indent To introduce the definitions of the main notions studied in this work,
some notation is needed. Let $A\in L(\X)$ and denote by $\Pi (A)=
\{\lambda\in \mathbb C\colon
asc(A-\lambda)=dsc(A-\lambda)<\infty\}$ (respectively
$\Pi_0(A)=\{\lambda\in \Pi(A)\colon \alpha(A-\lambda)<\infty\}$)
the set of poles of $A$ (respectively the poles of finite rank of
$A$). Similarly, denote by $\Pi ^a(A)= \{\lambda\in
\hbox{\rm iso }\hskip.1truecm \sigma_a(A)\colon a=asc(A-\lambda)<\infty \hbox{ and
}R(A-\lambda)^{a+1} \hbox{ is closed}\}$ (respectively
$\Pi_0^a(A)= \{\lambda\in\Pi ^a(A)\colon
\alpha(A-\lambda)<\infty\}$) the set of left poles of $A$
(respectively, the left poles of finite rank of $A$). \par

\indent In addition, given $A\in L(\X)$, let $E(A)=\{ \lambda\in \hbox{iso }\sigma (A)\colon 0<\alpha (A-\lambda)\}$
 (respectively $E_0(A)=\{ \lambda\in E(A)\colon \alpha(A-\lambda)<\infty\}$)
the set of eigenvalues of $A$ which are isolated in the spectrum of $A$
(respectively, the eigenvalues of finite multiplicity isolated in $\sigma (A)$).
Similarly, let $E^a(A)=\{ \lambda\in \hbox{iso } \sigma_a (A)\colon 0<\alpha (A-\lambda)\}$
(respectively $E_0^a(A)=\{ \lambda\in E^a(A)\colon \alpha(A-\lambda)<\infty\}$)
the set of eigenvalues of $A$ which are isolated in $\sigma_a(A)$
(respectively  the  eigenvalues of finite multiplicity isolated in $\sigma_a(A)$).
\par

\indent Next the definitions of the main notions studied in this article will be given.\par

\begin{df}\label{def1}Consider a Banach space $\X$ and $A\in L(\X)$. Then it will be said that\par
\noindent \rm (i) \it Browder's theorem holds for $A$, if $\sigma_w(A)=\sigma(A)\setminus \Pi_0(A)$,\par
\noindent \rm (ii) \it generalized Browder's theorem  holds for $A$, if $\sigma_{BW}(A)=\sigma(A)\setminus \Pi(A)$,\par
\noindent \rm (iii) \it $a$-Browder's theorem holds for $A$, if $\sigma_{aw}(A)=\sigma_a(A)\setminus \Pi_0^a(A)$,\par
\noindent \rm (iv) \it generalized $a$-Browder's theorem  holds for $A$, if $\sigma_{SBF_+^-}(A)=\sigma_a(A)\setminus \Pi^a(A)$,\par
\end{df}
\pagestyle{myheadings} \markboth{  }{ \rm \hskip5truecm Browder's and Weyl's Theorems}
\begin{df}\label{def2}Consider a Banach space $\X$ and $A\in L(\X)$. Then it will be said that\par
\noindent \rm (i) \it Weyl's theorem holds for $A$, if $\sigma_w(A)=\sigma(A)\setminus E_0(A)$,\par
\noindent \rm (ii) \it generalized Weyl's theorem  holds for $A$, if $\sigma_{BW}(A)=\sigma(A)\setminus  E(A)$,\par
\noindent \rm (iii) \it $a$-Weyl's theorem holds for $A$, if $\sigma_{aw} (A)=\sigma_a (A)\setminus E^a_0(A)$,\par
\noindent \rm (iv) \it generalized $a$-Weyl's theorem holds for $A$, if $\sigma_{SBF^-_+} (A)=\sigma_a (A)\setminus E^a (A)$.
\end{df}

\indent Finally, recall that an operator $T\in L(\X)$ is said to
have SVEP, the single--valued extension property,  at a (complex)
point $\lambda_0$, if for every open disc $D$ centered
at $\lambda_0$ the only analytic function $f:D\longrightarrow \X$ satisfying $(T-\lambda)f(\lambda)=0$ is the
function $f\equiv 0$. We say that $T$ has SVEP on a subset
$K$ of the complex plane if it has SVEP at every point
of $K$. Trivially, every operator $T$ has SVEP at
points of the resolvent $\rho(A)={\mathbb C}\setminus \sigma(T)$. Also
$T$ has  SVEP at points $\lambda\in \textrm{iso}\hskip.1truecm \sigma(T)$ and
$\lambda\in \textrm{iso}\hskip.1truecm \sigma_a(T)$.
See \cite[Chapters 2-3]{A} for
more information on operators with SVEP.\par

\section {\sfstp Spectra of generalized derivations } \setcounter{df}{0}
\

\indent In this section the Browder spectrum, the Browder essential approximate point spectrum
and the Drazin spectrum of a generalized derivation will be characterized.
To this end, let $\X$ and $\Y$ be two Banach spaces and consider $A\in L(\X)$ and $B\in L(\Y)$.
Let $\rho\colon L(\Y,\X)\to L(\Y,\X)$ be the generalized derivation defined
by $A$ and $B$, i.e., $\rho (U)= AU-UB$, $U\in L(\Y,\X)$. In other words,
$\rho=L_A-R_B$, where if
$S\in L(\X)$ (respectively $S\in L(\Y)$), then $L_S\in L(L(\Y,\X))$ (respectively $R_S\in L(L(\Y,\X))$) is
 the operator defined by left (respectively right) multiplication by $S$, i.e., 
for $U\in L(\Y,\X)$, $L_S(U)=SU$ (respectively $R_S(U)=US$).
In first place, the Browder spectrum of $\rho\in L(L(\Y,\X))$ will be studied.\par

\begin{rema}\label{rem1} \rm Let $\X$ and $\Y$ be two Banach spaces and consider
$A\in L(\X)$ and $B\in L(\Y)$. Let $\rho\in L(L(\Y,\X))$ be the generalized derivation defined by $A$ and $B$.\par
\noindent (i) According to \cite[Corollary 3.4]{E},
$\sigma(\rho)=\sigma (A)-\sigma (B)$. \par
\noindent (ii) It is not difficult to prove
that 
\begin{align*} \hbox{acc }\sigma (\rho)&=(\hbox{acc }\sigma(A)-\sigma (B))\cup
(\sigma (A)-\hbox{acc } \sigma (B))\\
&= (\hbox{acc }\sigma (A)- \hbox{acc } \sigma (B))\cup (\hbox{acc } \sigma (A)-
\hbox{iso } \sigma (B))\cup (\hbox{iso } \sigma (A) -\hbox{acc } \sigma (B)).
\end{align*}

\indent What is more,
$$
\hbox{iso }\sigma (\rho)= (\hbox{iso }\sigma (A)-\hbox{iso } \sigma (B))\setminus 
\hbox{acc } \sigma (\rho).
$$
\noindent (iii) Recall that according to \cite[Corollary 3.4]{E},
$\sigma_e (\rho)= (\sigma (A)-\sigma_e(B))\cup (\sigma_e(A)-\sigma(B))$.
Now well, since $\sigma_b (\rho)=\sigma_e (\rho)\cup \hbox{ \rm acc } \sigma (\rho)$,
according to (ii),
$$
\sigma_b(\rho)= (\sigma (A)-\sigma_b (B))\cup (\sigma_b(A)-\sigma (B)).
$$
\end{rema}
\indent Next the Browder essential approximate point spectrum of $\rho\in L(L(\Y,\X))$
will be characterized. However, first some preparation is needed.\par

\begin{rema}\label{rem32}\rm Let $\X$ and $\Y$ be two Banach spaces and consider $A\in L(\X)$ and $B\in L(\Y)$.
Let $M$ be the two-tuple of commuting operators $M=(L_A,R_B)$,
$L_A$ and $R_B\in L(L(\Y,\X))$. Recall that the \it approximate
 point joint spectrum \rm and the \it upper semi-Fredholm
joint spectrum \rm of $M$ are the sets
 \begin{align*}
&\sigma_{\pi}(M)=\{ (\mu,\nu)\in \mathbb C^2\colon V(A-\mu,
B-\nu) \hbox{ is not bounded below}\}\\
 \hbox{ and                  }\hskip3truecm&\\
&\sigma_{\Phi_+}(M)=\{ (\mu,\nu)\in \mathbb C^2\colon V(A-\mu, B-\nu) \hbox{ is not upper semi-Fredholm}\},\\
\end{align*}
respectively, where $V(A-\mu, B-\nu)\colon L(\Y,\X)\to
L(\Y\X)\times L(\Y,\X)$, $V(A-\mu, B-\nu)(S)= (L_{A-\mu}(S),
R_{B-\nu}$  
$(S))= ((A-\mu)S, S(B-\nu))$. Note that the conditions of being 
bounded below and upper semi-Fredholm  for operators defined between
two diferent Banach spaces is similar to the one given in section 2. Concerning the properties of
these joint spectra, see for example \cite{Bo1,BHW,Sl}.
\end{rema}
\pagestyle{myheadings} \markboth{  }{ \hskip5truecm \rm E. Boasso and M. Amouch  }
\begin{pro}\label{pro6} Let $\X$ and $\Y$ be two Banach spaces and  consider $A\in L(\X)$ and $B\in L(\Y)$. Let $\rho\in L(L(\Y,\X))$ be the 
generalized derivation defined by $A$ and $B$. Then the following statement hold.\par
\noindent \rm (i) \it $\sigma_a (\rho)=\sigma_a(A)-\sigma_a(B^*)$.\par
\noindent \rm (ii) \it  $\sigma_{SF_+} (\rho)= (\sigma_{SF_+} (A)-\sigma_a(B^*))\cup (\sigma_a(A)-\sigma_{SF_+}(B^*))$.\par
\noindent \rm (iii) \it $\hbox{\rm acc }\sigma_a(\rho)= (\hbox{\rm acc }\sigma_a (A)-\sigma_a(B^*))\cup (\sigma_a (A)-\hbox{ \rm acc }\sigma_a
(B^*))$.\par
\noindent \rm (iv) \it $\hbox{\rm iso }\sigma_a (\rho)=(\hbox{\rm iso }\sigma_a (A)-\hbox{ \rm iso }\sigma_a (B^*))\setminus \hbox{ \rm acc }\sigma_a(\rho)$.\par
\noindent \rm (v) \it  $\sigma_{ab} (\rho)= (\sigma_{ab} (A)-\sigma_a(B^*))\cup (\sigma_a(A)-\sigma_{ab}(B^*))$.\par
\end{pro}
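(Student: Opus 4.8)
\indent The plan is to route everything through the commuting two--tuple $M=(L_A,R_B)$ of Remark \ref{rem32}. The backbone is to identify its joint approximate point spectrum and joint upper semi--Fredholm spectrum,
$$
\sigma_\pi(M)=\sigma_a(A)\times\sigma_a(B^*),\qquad
\sigma_{\Phi_+}(M)=(\sigma_{SF_+}(A)\times\sigma_a(B^*))\cup(\sigma_a(A)\times\sigma_{SF_+}(B^*)),
$$
and then to invoke the spectral mapping theorems for $\sigma_\pi$ and for $\sigma_{\Phi_+}$ under the linear map $p(\mu,\nu)=\mu-\nu$ (so that $p(M)=L_A-R_B=\rho$); see \cite{Bo1,BHW,Sl}. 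These yield $\sigma_a(\rho)=\{\,\mu-\nu\colon(\mu,\nu)\in\sigma_\pi(M)\,\}$ and $\sigma_{SF_+}(\rho)=\{\,\mu-\nu\colon(\mu,\nu)\in\sigma_{\Phi_+}(M)\,\}$, which are precisely (i) and (ii).

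\indent For the first identity I would argue as follows. If $A-\mu$ is bounded below, then $\|(A-\mu)S\|\ge c\|S\|$ for all $S\in L(\Y,\X)$, so $V(A-\mu,B-\nu)$ is bounded below; likewise, if $B^*-\nu$ is bounded below, then $\|S(B-\nu)\|=\|(B^*-\nu)S^*\|\ge c\|S^*\|=c\|S\|$, so again $V(A-\mu,B-\nu)$ is bounded below. Conversely, given $\mu\in\sigma_a(A)$ and $\nu\in\sigma_a(B^*)$, choose unit vectors $x_n\in\X$ with $(A-\mu)x_n\to 0$ and unit functionals $\varphi_n\in\Y^*$ with $(B^*-\nu)\varphi_n\to 0$, and let $S_n\in L(\Y,\X)$ be the rank--one operator $y\mapsto\varphi_n(y)x_n$; then $\|S_n\|=1$, while $(A-\mu)S_n$ and $S_n(B-\nu)$ are the rank--one operators $y\mapsto\varphi_n(y)(A-\mu)x_n$ and $y\mapsto((B^*-\nu)\varphi_n)(y)x_n$, both of norm tending to $0$, so $V(A-\mu,B-\nu)$ is not bounded below. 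The computation of $\sigma_{\Phi_+}(M)$ has the same flavour, but now one must exhibit genuinely finite--dimensional obstructions: here I would either quote the structure of the joint Fredholm--type spectra of $M$ from \cite{Bo1,BHW,Sl}, or refine the construction above, replacing $x_n$ and $\varphi_n$ by finite systems adapted to $N(A-\mu)$ and $N(B^*-\nu)$ and verifying that $R(V(A-\mu,B-\nu))$ is closed. I expect this closedness verification to be the one genuinely delicate step.

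\indent Parts (iii) and (iv) are then elementary consequences of (i) and the compactness of $\sigma_a(A)$ and $\sigma_a(B^*)$, to be argued exactly as the analogous statement for $\sigma(\rho)$ in Remark \ref{rem1}(ii): a point $\lambda=\mu-\nu$ of $\sigma_a(\rho)$ lies in $\hbox{acc }\sigma_a(\rho)$ precisely when along a sequence of distinct points $\mu_n-\nu_n\to\lambda$ of $\sigma_a(A)-\sigma_a(B^*)$ either infinitely many $\mu_n$ or infinitely many $\nu_n$ are pairwise distinct, and passing to convergent subsequences of $(\mu_n)$ and $(\nu_n)$ gives $\hbox{acc }\sigma_a(\rho)=(\hbox{acc }\sigma_a(A)-\sigma_a(B^*))\cup(\sigma_a(A)-\hbox{acc }\sigma_a(B^*))$, i.e.\ (iii). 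For (iv) one has $\hbox{iso }\sigma_a(\rho)=\sigma_a(\rho)\setminus\hbox{acc }\sigma_a(\rho)$; if $\lambda=\mu-\nu\in\hbox{iso }\sigma_a(\rho)$ with $\mu\in\sigma_a(A)$ and $\nu\in\sigma_a(B^*)$, then (iii) rules out $\mu\in\hbox{acc }\sigma_a(A)$ and $\nu\in\hbox{acc }\sigma_a(B^*)$, so $\lambda\in(\hbox{iso }\sigma_a(A)-\hbox{iso }\sigma_a(B^*))\setminus\hbox{acc }\sigma_a(\rho)$; the reverse inclusion is immediate from (i).

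\indent Finally, for (v) I would use the identity $\sigma_{ab}(T)=\sigma_{SF_+}(T)\cup\hbox{acc }\sigma_a(T)$, valid for every $T\in L(\X)$ (if $T$ is upper semi--Fredholm with finite ascent, then $T-\lambda$ is bounded below for all $\lambda$ in a punctured neighbourhood of $0$, so $0\notin\hbox{acc }\sigma_a(T)$; conversely, an upper semi--Fredholm $T$ with $0\notin\hbox{acc }\sigma_a(T)$ has SVEP at $0$ and hence finite ascent; cf.\ \cite{R2}, this being the approximate--point counterpart of the identity $\sigma_b(A)=\sigma_e(A)\cup\hbox{acc }\sigma(A)$ recalled in Section 2). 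Applying this to $\rho$, to $A$ and to $B^*$, then substituting (ii) and (iii) and using the trivial set identities $(X_1-Y)\cup(X_2-Y)=(X_1\cup X_2)-Y$ and $(X-Y_1)\cup(X-Y_2)=X-(Y_1\cup Y_2)$, one gets
$$
\sigma_{ab}(\rho)=((\sigma_{SF_+}(A)\cup\hbox{acc }\sigma_a(A))-\sigma_a(B^*))\cup(\sigma_a(A)-(\sigma_{SF_+}(B^*)\cup\hbox{acc }\sigma_a(B^*))),
$$
whose right--hand side is exactly $(\sigma_{ab}(A)-\sigma_a(B^*))\cup(\sigma_a(A)-\sigma_{ab}(B^*))$, which is (v).
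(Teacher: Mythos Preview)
Your proposal is correct. For parts (i)--(iv) you follow exactly the paper's route: identify the joint approximate point and upper semi-Fredholm spectra of $M=(L_A,R_B)$ (the paper simply cites \cite[Proposition 4.3]{BDJ} for these, rather than reproving them as you sketch) and then apply the polynomial spectral mapping theorems of \cite{Sl} and \cite{BHW} to $p(\mu,\nu)=\mu-\nu$; parts (iii) and (iv) are elementary consequences of (i), as both you and the paper say.

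For part (v) you take a genuinely different, and somewhat cleaner, route. The paper argues by double inclusion: writing $\mathbb S=(\sigma_{ab}(A)-\sigma_a(B^*))\cup(\sigma_a(A)-\sigma_{ab}(B^*))$, it takes $\lambda\in\sigma_a(\rho)\setminus\mathbb S$, uses (ii) and (iv) to see that $\rho-\lambda$ is upper semi-Fredholm with $\lambda\in\hbox{iso }\sigma_a(\rho)$, invokes SVEP via \cite[Theorem 3.16]{A} to get finite ascent, and concludes $\lambda\notin\sigma_{ab}(\rho)$ via \cite[Corollary 2.2]{R2}; the reverse inclusion is handled similarly, pointwise. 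You instead invoke the global identity $\sigma_{ab}(T)=\sigma_{SF_+}(T)\cup\hbox{acc }\sigma_a(T)$ once for each of $\rho$, $A$, $B^*$ (this is essentially \cite[Corollary 2.2]{R2}, as you note), substitute (ii) and (iii), and finish with the trivial set algebra $(X_1-Y)\cup(X_2-Y)=(X_1\cup X_2)-Y$ and its mirror. Both arguments rest on the same underlying characterization of $\sigma_{ab}$; yours packages it more efficiently and avoids the intermediate SVEP step, at the modest cost of asking the reader to accept the identity $\sigma_{ab}=\sigma_{SF_+}\cup\hbox{acc }\sigma_a$ up front.
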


\begin{proof} (i). Recall that according to the proof of \cite[Proposition 4.3(i)]{BDJ}, 
$$\sigma_{\pi}(L_A,R_B)=\sigma_a(A)\times \sigma_a(B^*).$$

To prove the statement under consideration, apply the spectral mapping theorem to the polynomial mapping $P\colon \mathbb{C}^2\to\mathbb{C}$,
$P(X,Y)=X-Y$ (\cite[Theorem 2.9]{Sl}).\par

\noindent (ii). According to the proof of \cite[Proposition 4.3(ii)]{BDJ}, 
$$
\sigma_{\Phi_+}(L_A,R_B)=\sigma_{SF_+}(A)\times \sigma_a(B^*)\cup \sigma_a(A)\times \sigma_{SF_+}(B^*).
$$
However, the statement under consideration can be derived applying  the spectral mapping theorem to the 
polynomial mapping $P\colon \mathbb{C}^2\to\mathbb{C}$,
$P(X,Y)=X-Y$ (\cite[Theorem 7]{BHW}).\par

\noindent (iii).-(iv). These statements can be easily deduced from statement (i).\par

\noindent (v). Denote by $\mathbb{S}$ the set $\mathbb{S}=(\sigma_{ab} (A)-\sigma_a(B^*))\cup (\sigma_a(A)-\sigma_{ab}(B^*))$.
Let $\lambda\in\sigma_a (\rho)\setminus\mathbb{S}$. Then, according to what has been proved and to \cite[Corollary 2.2]{R2},
$\lambda\notin \sigma_{SF_+}(\rho)$ and $\lambda\in\hbox{ \rm iso }\sigma_a (\rho)$.
In particular, according to \cite[Theorem 3.16]{A}, $asc (\rho-\lambda)$ is finite.
Therefore, according to \cite[Corollary 2.2]{R2}, $\lambda\in\sigma_a (\rho)\setminus\sigma_{ab}(\rho)$,
Hence, $\sigma_{ab}(\rho)\subseteq \mathbb{S}$.\par
\indent Now let $\lambda\in\sigma_a (\rho)\setminus \sigma_{ab}(\rho)$. Then, according to \cite[Corollary 2.2]{R2} and statement (iv),
there exist $\mu\in \hbox{ \rm iso }\sigma_a (A)$ and $\nu\in \hbox{ \rm iso }\sigma_a (B^*)$
such that $\lambda=\mu-\nu$. In addition, since $\sigma_{SF_+}(\rho)\subseteq \sigma_{ab}(\rho)$, according to statement (ii),
$\mu\notin \sigma_{SF_+}(A)$ and $\nu\notin \sigma_{SF_+}(B^*)$. In particular, according to \cite[Theorem 3.16]{A}
and  \cite[Corollary 2.2]{R2}, $\mu\notin \sigma_{ab}(A)$ and $\nu\notin \sigma_{ab}(B^*)$.
Consequently, $\lambda\in\sigma_a (\rho)\setminus \mathbb{S}$. Hence, $\mathbb{S}\subseteq \sigma_{ab}(\rho)$.
\end{proof}
 
\indent To fully characterize the Drazin spectrum of a generalized derivation $\rho\colon L(\Y,\X)\to L(\Y,\X)$, it is first necessary to
characterize the set of poles  $\Pi (\rho)$ and its complement in the isolated points of the spectrum of $\rho$, i.e.,  $I(\rho)=\hbox{ \rm iso } \sigma (\rho)\setminus \Pi (\rho)$. To this end, however, 
the following result need to be considered.\par
\pagestyle{myheadings} \markboth{  }{ \rm \hskip5truecm Browder's and Weyl's Theorems}

\begin{thm} \label{thm2}Let $\X$ and $\Y$ be two Bananch spaces and suppose that $A\in L(\X)$ and $B\in L(\Y)$ are such that $\sigma (A)=\{ \mu \}$
and $\sigma (B)=\{\nu \}$. Consider $\rho\in L(L(\Y,\X))$, the generalized derivation defined by $A$ and $B$.
Then, the following statements hold.\par
\noindent \rm (i) \it If $\Pi (A)=\{\mu\}$ and $\Pi (B)= \{ \nu \}$,
then $\sigma (\rho)=\Pi (\rho)=\{\mu-\nu\}$.\par

\noindent \rm (ii) \it  If $I(A)= \{ \mu \}$ or $I(B)= \{ \nu \}$, then
$\sigma (\rho)=I(\rho)=\{\mu-\nu\}$. 
 \end{thm}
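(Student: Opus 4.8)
Since $\sigma(A)=\{\mu\}$ and $\sigma(B)=\{\nu\}$, Remark \ref{rem1}(i) gives $\sigma(\rho)=\sigma(A)-\sigma(B)=\{\mu-\nu\}$, so in both cases the set equality with $\{\mu-\nu\}$ is immediate and everything reduces to deciding whether $\mu-\nu$ is a pole of $\rho$. Note that $\mathrm{iso}\,\sigma(\rho)=\{\mu-\nu\}=\sigma(\rho)$, so $I(\rho)=\{\mu-\nu\}\setminus\Pi(\rho)$, and the two statements say exactly: (i) if both $A$ and $B$ are Drazin invertible (equivalently $\mu-\nu\notin\sigma_{DR}(A)-\{\text{pt}\}$, i.e. the single spectral points are poles), then $\mu-\nu\in\Pi(\rho)$; (ii) if at least one of $A$, $B$ fails to be Drazin invertible at its spectral point, then $\mu-\nu\notin\Pi(\rho)$.

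For (i): if $\Pi(A)=\{\mu\}$ and $\Pi(B)=\{\nu\}$ then $A-\mu$ and $B-\nu$ are nilpotent on finite-ascent/descent grounds — more precisely, since $\sigma(A)=\{\mu\}$ and $\mu$ is a pole, $A-\mu$ is nilpotent, say $(A-\mu)^p=0$, and likewise $(B-\nu)^q=0$. Then I would compute directly that $\rho-(\mu-\nu)=L_{A-\mu}-R_{B-\nu}$, and since $L_{A-\mu}$ and $R_{B-\nu}$ commute and are both nilpotent (as $L$ and $R$ are algebra homomorphisms, $(L_{A-\mu})^p=L_{(A-\mu)^p}=0$ and similarly for $R$), their difference is nilpotent by the binomial theorem: $(\rho-(\mu-\nu))^{p+q-1}=0$. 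A nilpotent operator has $0$ as a pole, so $\mu-\nu\in\Pi(\rho)$, giving $\sigma(\rho)=\Pi(\rho)=\{\mu-\nu\}$.

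For (ii): suppose, say, $I(A)=\{\mu\}$, i.e. $\mu$ is an isolated point of $\sigma(A)$ that is \emph{not} a pole of $A$, so $\mathrm{asc}(A-\mu)=\infty$ or $\mathrm{dsc}(A-\mu)=\infty$. I want to conclude $\mathrm{asc}(\rho-(\mu-\nu))=\infty$ or $\mathrm{dsc}(\rho-(\mu-\nu))=\infty$, hence $\mu-\nu\notin\Pi(\rho)$, which combined with $\mathrm{iso}\,\sigma(\rho)=\{\mu-\nu\}$ yields $I(\rho)=\sigma(\rho)=\{\mu-\nu\}$. The natural route is to relate powers of $\rho-(\mu-\nu)=L_{A-\mu}-R_{B-\nu}$ to powers of $A-\mu$: embedding a single nonzero vector $x$ with $(A-\mu)^n x\neq 0$ into a rank-one operator $U=x\otimes\phi$ (for a suitable $\phi\in\Y^*$) and tracking $\rho^n(U)$, one should be able to transfer an infinite ascent of $A-\mu$ to one of $\rho-(\mu-\nu)$; a dual/annihilator argument handles the descent case and the case $I(B)=\{\nu\}$.

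\textbf{Main obstacle.} The delicate point is part (ii): while nilpotency in (i) propagates cleanly through $L$ and $R$, non-poleness is a statement about \emph{infinite} ascent or descent, and because $\rho$ is a \emph{difference} $L_{A-\mu}-R_{B-\nu}$ of two commuting non-nilpotent-but-"generalized-nilpotent" pieces, one cannot argue termwise in the binomial expansion. The careful work is choosing the rank-one test operator so that the interaction between left multiplication by $(A-\mu)$ and right multiplication by $(B-\nu)$ does not accidentally collapse the relevant iterates — equivalently, showing that the quasinilpotent parts genuinely obstruct finite ascent/descent of the derivation. I expect this is handled by a direct computation on rank-one operators together with the observation that $\sigma(B)=\{\nu\}$ forces the $R_{B-\nu}$ contribution to be quasinilpotent, so the "effective" behavior of $\rho-(\mu-\nu)$ at the level of iterates is controlled by $A-\mu$ alone.
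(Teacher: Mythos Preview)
Your treatment of part (i) is correct and essentially identical to the paper's: both of you observe that $\rho-(\mu-\nu)=L_{A-\mu}-R_{B-\nu}$ is a difference of commuting nilpotents, hence nilpotent, hence $\mu-\nu$ is a pole.

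For part (ii) you have the right ingredients (rank-one operators $U_{x,\phi}$, the binomial expansion) but you miss the reduction that makes the argument clean, and as a result your sketch stays at the level of an obstacle rather than a proof. The key point you overlook is that when $\sigma(\rho)=\{\mu-\nu\}$ is a singleton, $\mu-\nu\in\Pi(\rho)$ is \emph{equivalent} to $\rho-(\mu-\nu)$ being nilpotent. So there is no need to chase infinite ascent versus infinite descent separately: you only have to produce, for every $k$, a single operator $U$ with $(\rho-(\mu-\nu))^k U\neq 0$.

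The paper then resolves exactly the ``interaction'' you flag as the main obstacle, and it \emph{does} argue termwise in the binomial expansion. Suppose $I(B)=\{\nu\}$ (the other case is symmetric). Since $\mu\in\sigma(A)=\sigma_a(A)$, pick unit approximate eigenvectors $x_n$ with $(A-\mu)x_n\to 0$; since $B-\nu$ is quasinilpotent but not nilpotent, for each fixed $k$ pick $f_k\in\Y^*$ with $\|((B-\nu)^*)^k f_k\|$ bounded away from $0$. In
\[
(\rho-(\mu-\nu))^k U_{x_n,f_k}=\sum_{j=1}^k c_{k,j}\,U_{(A-\mu)^j x_n,\,((B-\nu)^*)^{k-j}f_k}\;+\;(-1)^k\,U_{x_n,\,((B-\nu)^*)^{k}f_k},
\]
every term with $j\ge 1$ tends to $0$ as $n\to\infty$ (because $(A-\mu)^j x_n\to 0$), while the last term has norm bounded below independently of $n$. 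Hence $(\rho-(\mu-\nu))^k\neq 0$ for all $k$, so $\rho-(\mu-\nu)$ is not nilpotent and $\mu-\nu\notin\Pi(\rho)$.

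In short: your instinct to use rank-one test operators is exactly right, but the missing idea is the nilpotency reduction together with the asymmetric choice --- approximate eigenvectors on the side where no hypothesis is made, and the non-nilpotency witness on the side assumed to lie in $I(\cdot)$ --- which kills all cross terms and isolates the one you need.
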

\begin{proof}Clearly, $\sigma (\rho)=\{\mu-\nu\}$. In addition, 
Note that
$$
\rho-(\mu-\nu)= L_{(A-\mu)}-R_{(B-\nu)}.
$$
\noindent (i) Suppose that $\Pi (A)=\{\mu\}$ and $\Pi (B)= \{ \nu \}$. Since the operators $L_{(A-\mu)}$ and $R_{(B-\nu)}$ are nilpotent (\cite[Remark 3.1(i)]{B}) and commute, 
$\rho-(\mu-\nu)$ is nilpotent, equivalently, $\sigma (\rho)=\Pi (\rho)=\{\mu-\nu\}$ (\cite[Remark 3.1(i)]{B}) .\par

\noindent (ii) Let $x\in \X$ and $f\in \Y^*$. Define the operator $U_{x,f}\in L(\Y,\X)$ as
follows: $U_{x,f}(z)=xf(z)$, where $z\in \Y$. Note that
since $\sigma (A)=\{ \mu \}$, there exist a sequence $(x_n)_{n\in \mathbb N}\subset X$ such that
$\parallel x_n\parallel =1$, for all $n\in \mathbb N$, and $((A-\mu)(x_n))_{n\in\mathbb N}$ converges
to $0$. Suppose that $I(B)= \{ \nu \}$. Since $\sigma (B^*)= I(B^*)= \{ \nu \}$, for each $k\in \mathbb N$, there
is $f_k\in \Y^*$ such that $\parallel ((B-\nu)^*)k(f_k)\parallel =2$ (\cite[Remark 3.1(ii)]{B}).\par

\indent Now well, note that given $k\in\mathbb N$,
$$
(\rho-(\mu-\nu))^k=\sum_{j=1}^k c_{k,j} L_{(A-\mu)^j}R_{(B-\nu)^{k-j}} +(-1)^k R_{(B-\nu)^k},
$$

\noindent where $c_{k,j}=(-1)^{k-j}\frac{k!}{(k-j)!j!}$. However,
using $(U_{x_n,f_k})_{n\in\mathbb N}\subset L(\Y, \X)$, it is not difficult to prove that there is 
$n_0\in \mathbb N$ such that 
$$\parallel (\sum_{j=1}^k c_{k,j} L_{(A-\mu)^j}R_{(B-\nu)^{k-j}})(U_{x_n,f_k})\parallel<1,$$
\noindent for all $n\ge n_0$, $n\in\mathbb N$. Since $\parallel R_{(B-\nu)^k}(U_{x_n,f_k})\parallel=2$, 
$$\parallel (\rho-(\mu-\nu))^k(U_{x_n,f_k})\parallel\ge 1,$$
\noindent $n\ge n_0$. Therefore, since $k\in\mathbb N$ is arbitrary,
$\sigma (\rho)=I(\rho)=\{\mu-\nu\}$ (\cite[Remark 3.1(ii)]{B}).\par

\indent Interchanging $A$ with $B$, it is possible to prove the theorem under the assumption 
$I(A)= \{ \mu \}$.
\end{proof}

\indent Next, the isolated points of the spectrum of a generalized derivation
will be characterized. 

\begin{thm}\label{thm3}Let $\X$ and $\Y$ be two Banach spaces and consider
$A\in L(\X)$ and $B\in L(\Y)$. Let $\rho\in L(L(\Y, \X))$ be the generalized derivation defined by $A$ and $B$. Then, the following statements hold.\par
\noindent \rm (i) \it $I(\rho)= ((I(A)-$ \rm iso \it  $\sigma (B))\cup$ $($\rm iso \it $\sigma (A)- I(B)))\setminus \hbox{ acc }\sigma (\rho)$.\par
\noindent \rm (ii) \it $\Pi (\rho )= (\Pi (A)-\Pi (B))\setminus \sigma_{DR} (\rho)$.
\end{thm}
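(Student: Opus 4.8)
The plan is to prove the two displayed identities of Theorem~\ref{thm3} by reducing each to the ``one-point spectrum'' situation already settled in Theorem~\ref{thm2}, using the standard quasinilpotent (local) decomposition attached to an isolated spectral point. First I would recall that if $\lambda_0 \in \hbox{iso }\sigma(\rho)$, then writing $P$ for the associated Riesz projection gives a direct sum decomposition $L(\Y,\X) = R(P)\oplus N(P)$ that reduces $\rho$, with $\sigma(\rho|_{R(P)}) = \{\lambda_0\}$ and $\lambda_0 \notin \sigma(\rho|_{N(P)})$; moreover $\lambda_0 \in \Pi(\rho)$ iff $\rho|_{R(P)} - \lambda_0$ is nilpotent, and $\lambda_0 \in I(\rho)$ otherwise. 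The same remarks apply to $A$ on $\X$ and to $B$ on $\Y$. The point of contact with Theorem~\ref{thm2} is that if $\mu \in \hbox{iso }\sigma(A)$, $\nu \in \hbox{iso }\sigma(B)$ and $\mu - \nu = \lambda_0$ is isolated in $\sigma(\rho)$ (which by Remark~\ref{rem1}(ii) forces $\mu,\nu$ to be the \emph{only} points of $\sigma(A),\sigma(B)$ with $\mu-\nu=\lambda_0$), then one can pass to the restrictions $A|_{R(P_A)}$ and $B|_{R(P_B)}$, whose spectra are the singletons $\{\mu\}$ and $\{\nu\}$, and apply Theorem~\ref{thm2} to them; the key compatibility to check is that the generalized derivation built from these restrictions is (up to the reduction by the Riesz projection $P$ of $\rho$) exactly $\rho|_{R(P)}$. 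This last compatibility follows from the fact that $L_{P_A}$, $R_{P_B}$ commute, are idempotent, and their product $L_{P_A}R_{P_B}$ is precisely the Riesz projection of $\rho=L_A-R_B$ at $\lambda_0$ when $\lambda_0$ is a single ``cell'' $\mu-\nu$; when $\lambda_0$ splits as several differences $\mu_i - \nu_i$, $P = \sum_i L_{P_{A,\mu_i}} R_{P_{B,\nu_i}}$ and one treats each summand separately.

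For part~(i), I would argue both inclusions. For ``$\supseteq$'': take $\lambda_0$ in the right-hand side, so $\lambda_0 \notin \hbox{acc }\sigma(\rho)$ (hence $\lambda_0 \in \hbox{iso }\sigma(\rho)$ by Remark~\ref{rem1}(i)--(ii)) and, say, $\lambda_0 = \mu - \nu$ with $\mu \in I(A)$, $\nu \in \hbox{iso }\sigma(B)$ (the other case is symmetric in form, though not literally, since $\rho$ is not symmetric in $A,B$ — I would just repeat the argument). Decompose $\rho$ at $\lambda_0$; on the finitely-many-cells reduction the relevant cell corresponding to $(\mu,\nu)$ contributes, via Theorem~\ref{thm2}(ii), a summand on which $\rho - \lambda_0$ is \emph{not} nilpotent, while the other cells are handled by Theorem~\ref{thm2}(i)--(ii) as well. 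Hence $\rho|_{R(P)} - \lambda_0$ is not nilpotent, so $\lambda_0 \in I(\rho)$. For ``$\subseteq$'': take $\lambda_0 \in I(\rho)$. Then $\lambda_0 \in \hbox{iso }\sigma(\rho)$, so $\lambda_0 \notin \hbox{acc }\sigma(\rho)$, and by Remark~\ref{rem1}(ii) every representation $\lambda_0 = \mu - \nu$ with $\mu \in \sigma(A)$, $\nu \in \sigma(B)$ has $\mu \in \hbox{iso }\sigma(A)$, $\nu \in \hbox{iso }\sigma(B)$. If \emph{every} such $\mu$ lay in $\Pi(A)$ and every such $\nu$ in $\Pi(B)$, then each cell would be nilpotent by Theorem~\ref{thm2}(i), forcing $\rho - \lambda_0$ nilpotent on $R(P)$, i.e. $\lambda_0 \in \Pi(\rho)$, a contradiction. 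So some $\mu$ is in $I(A)=\hbox{iso }\sigma(A)\setminus\Pi(A)$ with matching $\nu \in \hbox{iso }\sigma(B)$, or symmetrically; that places $\lambda_0$ in the right-hand side.

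For part~(ii), the inclusion $\Pi(\rho) \subseteq (\Pi(A)-\Pi(B))\setminus \sigma_{DR}(\rho)$ is the easier direction: a pole is Drazin invertible, so $\Pi(\rho)\cap \sigma_{DR}(\rho)=\emptyset$ is immediate; and if $\lambda_0 \in \Pi(\rho)$ then $\lambda_0 \in \hbox{iso }\sigma(\rho)$ with $\rho - \lambda_0$ nilpotent on the Riesz summand, so by part~(i) (contrapositive) $\lambda_0$ is \emph{not} of the form $I(A)-\hbox{iso }\sigma(B)$ nor $\hbox{iso }\sigma(A)-I(B)$; combined with $\lambda_0 = \mu-\nu$, $\mu\in\hbox{iso }\sigma(A)$, $\nu\in\hbox{iso }\sigma(B)$ (Remark~\ref{rem1}(ii)), this forces $\mu\in\Pi(A)$, $\nu\in\Pi(B)$ for \emph{every} such pair, giving $\lambda_0 \in \Pi(A)-\Pi(B)$. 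Conversely, suppose $\lambda_0 = \mu - \nu$ with $\mu \in \Pi(A)$, $\nu \in \Pi(B)$ and $\lambda_0 \notin \sigma_{DR}(\rho)$. Drazin invertibility of $\rho - \lambda_0$ gives a decomposition $L(\Y,\X) = R((\rho-\lambda_0)^k)\oplus N((\rho-\lambda_0)^k)$ with $\rho-\lambda_0$ invertible on the first summand and nilpotent on the second; in particular $\lambda_0 \in \hbox{iso }\sigma(\rho)\cup\varrho(\rho)$. If $\lambda_0\in\varrho(\rho)$ then trivially $\lambda_0\in\Pi(\rho)$. If $\lambda_0\in\hbox{iso }\sigma(\rho)$, I claim the Riesz summand $R(P)$ coincides with $N((\rho-\lambda_0)^k)$ (both equal the algebraic eigenspace, by Drazin theory as recalled in Section~2), so $\rho-\lambda_0$ is nilpotent there, i.e. $\lambda_0\in\Pi(\rho)$.

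The main obstacle, I expect, is the bookkeeping in the reduction when $\lambda_0 \in \sigma(\rho)$ is represented by \emph{several} cells $\mu_i - \nu_i$ simultaneously: one must check that the Riesz projection of $\rho$ at $\lambda_0$ decomposes as the sum $\sum_i L_{P_{A,\mu_i}}R_{P_{B,\nu_i}}$ over all such pairs (using that $\sigma(A),\sigma(B)$ have only finitely many points mapping to $\lambda_0$, which follows from $\mu_i,\nu_i$ being isolated and $\sigma(A)$ compact, once we know $\lambda_0\in\hbox{iso }\sigma(\rho)$), that these summand projections commute and are mutually orthogonal, and that $\rho$ restricted to each $L_{P_{A,\mu_i}}R_{P_{B,\nu_i}}L(\Y,\X)$ is the generalized derivation attached to $A|_{R(P_{A,\mu_i})}$ and $B|_{R(P_{B,\nu_i})}$, so that Theorem~\ref{thm2} applies cell by cell. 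Granting this decomposition, $\rho-\lambda_0$ is nilpotent on $R(P)$ iff it is nilpotent on each cell, which by Theorem~\ref{thm2} happens iff every $\mu_i\in\Pi(A)$ and every $\nu_i\in\Pi(B)$ — and this is exactly what drives both parts. I would isolate this cell-decomposition as a short lemma (or invoke the analogous computation from \cite{BDJ}), after which (i) and (ii) become the essentially formal case analyses sketched above.
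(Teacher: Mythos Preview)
Your proposal is correct and follows essentially the same route as the paper: decompose $L(\Y,\X)$ via the spectral (Riesz) projections of $A$ and $B$ at the finitely many pairs $(\mu_i,\nu_i)$ with $\mu_i-\nu_i=\lambda_0$, observe that $\rho-\lambda_0$ is invertible on the off-diagonal blocks, and apply Theorem~\ref{thm2} cell by cell on each $L(N_{1i},M_{1i})$; the paper cites \cite[Remark~3.1]{B} for exactly the block machinery you flag as the ``main obstacle.'' One small slip to fix: your early parenthetical claiming Remark~\ref{rem1}(ii) forces the pair $(\mu,\nu)$ to be \emph{unique} is false in general (as you yourself note a few lines later when you pass to $\sum_i L_{P_{A,\mu_i}}R_{P_{B,\nu_i}}$); and for (ii) the paper takes a marginally shorter path, deducing the formula directly from (i), the description of $\hbox{iso }\sigma(\rho)$ in Remark~\ref{rem1}(ii), and \cite[Theorem~4]{K}, rather than your direct Drazin-invertibility argument.
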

 \begin{proof}
\pagestyle{myheadings} \markboth{  }{ \hskip5truecm \rm E. Boasso and M. Amouch  }
\indent Let $\lambda\in$ iso $\sigma
(\rho)$. Note that  there exist $n\in\mathbb N$ and finite spectal sets
$\{\mu\}=\{\mu_1,\ldots ,\mu_n\}\subseteq$ iso $\sigma (A)$ and
$\{\nu\}=\{\nu_1,\ldots ,\nu_n\}\subseteq$ iso $\sigma (B)$ such
that $\lambda=\mu_i-\nu_i$ for all $1\le i\le n$ and that if there are $\mu'\in$ iso $\sigma (A)$ and $\nu'\in$
iso $\sigma (B)$ such that $\lambda=\mu'-\nu'$, then there is $i$, $1\le i\le n$, such that
$\mu'=\mu_i$ and $\nu'=\nu_i$. Corresponding to
these spectral sets there are closed subspaces $M_1$, $M_2$ and
$(M_{1i})_{i=1}^n$ of $\X$ invariant for $A$
 and closed subspaces $N_1$, $N_2$ and $(N_{1i})_{i=1}^n $ of $\Y$ invariant for $B$ such that
$\X=M_1\oplus M_2$, $M_1=\oplus_{i=1}^n M_{1i}$,  $\Y=N_1\oplus
N_2$, $N_1=\oplus_{i=1}^n N_{1i}$, $\sigma(A_1)=\{\mu\}$,
$\sigma(A_2)=\sigma(A)\setminus \{\mu\}$,
$\sigma(A_{1i})=\{\mu_i\}$, $\sigma(B_1)=\{\nu\}$,
$\sigma(B_2)=\sigma(B)\setminus \{\nu\}$ and
$\sigma(B_{1i})=\{\nu_i\}$, where $A_1=A\mid_{M_1}$,
$A_2=A\mid_{M_2}$, $A_{1i}=A\mid_{M_{1i}}$, $B_1=B\mid_{N_1}$,
$B_2=B\mid_{N_2}$ and $B_{1i}=B\mid_{N_{1i}}$. In addition, $\rho-\lambda$ is invertible on the closed invariant subspaces
$L(N_2,M_1)$, $L(N_1,M_2)$,
$L(N_2,M_2)$ and $L(N_{1j},M_{1k})$,
$1\le j\neq k\le n$. Moreover, $L(\Y,\X)$ is the
direct sum of these subspaces and $L(N_{1i},M_{1i})$,
$1\le i\le n$. To prove statement (i), define $Z=
((I(A)-$ \rm iso \it  $\sigma (B))\cup$ $($\rm iso \it $\sigma (A)- I(B))) \setminus \hbox{ acc }\sigma (\rho)$.\rm
\par

\indent Let  $\lambda\in I(\rho)$ and consider $n=n(\lambda)\in \mathbb N$ and  $\mu_i\in$ iso $\sigma(A)$
and $\nu_i\in$ iso $\sigma(B)$ such that $\lambda=\mu_i-\nu_i$, $i=1,\ldots ,n$.
Now well, if $\lambda\notin Z$, then for each $i=1,\ldots ,n$, $\mu_i\in \Pi(A)$ and $\nu_i\in \Pi (B)$.
However, according to Theorem \ref{thm2}(i)  and \cite[Remark  3.1]{B}, $\lambda\in \Pi (\rho)$,
which is impossible. \par

\indent Suppose that $\lambda\in Z\subseteq \hbox{ iso }\sigma (\rho)$. Then, there exist $\mu\in$ iso $\sigma(A)$
and $\nu\in$ iso $\sigma(B)$ such that $\lambda=\mu-\nu$ and either $\mu\in I(A)$
or $\nu\in I(B)$. Applying to $\lambda\in Z$ what has been done in the first paragraph of this proof,
there exist an $n=n(\lambda)\in \mathbb N$ and an $i$, $1\le i\le n$, such that
$\mu=\mu_i$ and $\nu=\nu_i$. Therefore, according to Theorem \ref{thm2}(ii) and \cite[Remark  3.1]{B}, $\lambda\in I(\rho)$.\par

\indent To prove statement (ii), apply what has been proved, the fact that $\hbox{iso }\sigma (\rho)= (\hbox{iso }\sigma (A)-\hbox{iso } \sigma (B))\setminus 
\hbox{acc } \sigma (\rho)$ and \cite[Theorem 4]{K}.
\end{proof}

\indent Recall that given a Banach space $\X$ and $A\in L(\X)$, $A$ is said to be \it polaroid, \rm 
if iso $\sigma (A)=\Pi (A)$, equivalently $I(A)=\emptyset$. As a first application of   Theorem \ref{thm3},
it will be proved that this conditon transfers to generalized derivations.\par 

\begin{thm}\label{thm46} Let $\X$ and $\Y$ be two Banach spaces and consider
$A\in L(\X)$ and $B\in L(\Y)$ such that $A$ and $B$ are polaroid operators. Then, the generalized derivation defined by $A$ and $B$
$\rho\in L(L(\Y, \X))$ is polaroid.
\end{thm}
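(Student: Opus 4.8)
The plan is to deduce this immediately from Theorem~\ref{thm3}(i). Recall that an operator is polaroid precisely when its $I$-set is empty, so the goal is to show $I(\rho)=\emptyset$ under the hypothesis that $I(A)=\emptyset$ and $I(B)=\emptyset$.

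First I would invoke Theorem~\ref{thm3}(i), which gives
$$
I(\rho)=\bigl((I(A)-\hbox{iso }\sigma(B))\cup(\hbox{iso }\sigma(A)-I(B))\bigr)\setminus\hbox{acc }\sigma(\rho).
$$
Since $A$ is polaroid, $I(A)=\emptyset$, so the set $I(A)-\hbox{iso }\sigma(B)$ is empty; likewise, since $B$ is polaroid, $I(B)=\emptyset$, so $\hbox{iso }\sigma(A)-I(B)$ is empty. Hence the set being intersected with the complement of $\hbox{acc }\sigma(\rho)$ is itself empty, which forces $I(\rho)=\emptyset$.

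Finally I would record that $I(\rho)=\emptyset$ is, by the definition recalled just before the statement, exactly the assertion that $\hbox{iso }\sigma(\rho)=\Pi(\rho)$, i.e., that $\rho$ is polaroid. This completes the argument.

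There is essentially no obstacle here: the entire content has been pushed into Theorem~\ref{thm3}, whose proof (via the local spectral decomposition of $L(\Y,\X)$ and Theorem~\ref{thm2}) is the substantive part. The only thing to be careful about is the bookkeeping of which of the two "subtraction" sets in the formula for $I(\rho)$ vanishes because of which hypothesis, but both vanish, so the conclusion is immediate regardless.
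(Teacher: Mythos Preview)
Your argument is correct and matches the paper's own proof essentially verbatim: both invoke the formula for $I(\rho)$ from Theorem~\ref{thm3}(i) and observe that $I(A)=I(B)=\emptyset$ forces $I(\rho)=\emptyset$. The paper additionally records the resulting identity $\Pi(\rho)=(\Pi(A)-\Pi(B))\setminus\hbox{acc }\sigma(\rho)$, but this is incidental to the polaroid conclusion.
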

\begin{proof} According to the proof of Theorem \ref{thm3}, if $I(A)=\emptyset=I(B)$,
then $I(\rho)=\emptyset$ and iso $\sigma (\rho)=\Pi (\rho)= (\Pi (A) -\Pi (B))\setminus \hbox{ \rm acc }\sigma (\rho)$.
\end{proof}
\indent In the following theorem, the Drazin spectrum of a generalized derivation will be characterized.

\begin{thm}\label{thm4}Let $\X$ and $\Y$ be two Banach spaces and consider
$A\in L(\X)$ and  $B\in L(\Y)$. Then, if $\rho\colon L(\Y,\X)\to L(\Y,\X)$ is the generalized derivation defined by $A$ and $B$,
$$\sigma_{DR} (\rho)=(\sigma_{DR} (A)-\sigma (B))\cup (\sigma (A)-\sigma_{DR} (B)).$$ 
\end{thm}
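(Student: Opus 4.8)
The goal is to characterize $\sigma_{DR}(\rho)$ as $(\sigma_{DR}(A)-\sigma(B))\cup(\sigma(A)-\sigma_{DR}(B))$. I would prove this by a double inclusion, using that $\lambda\notin\sigma_{DR}(\rho)$ means exactly that $\rho-\lambda$ is Drazin invertible, which in turn means $\lambda\in\rho(\rho)$ or $\lambda\in\Pi(\rho)$ (a pole of the resolvent), since the Drazin-invertible points are precisely the resolvent points together with the poles. Thus $\sigma_{DR}(\rho)=\sigma(\rho)\setminus\Pi(\rho)$, and likewise $\sigma_{DR}(A)=\sigma(A)\setminus\Pi(A)$, $\sigma_{DR}(B)=\sigma(B)\setminus\Pi(B)$. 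Combined with Remark \ref{rem1}(i), which gives $\sigma(\rho)=\sigma(A)-\sigma(B)$, and with the characterization of $\Pi(\rho)$ in Theorem \ref{thm3}(ii), namely $\Pi(\rho)=(\Pi(A)-\Pi(B))\setminus\sigma_{DR}(\rho)$, the problem becomes an elementary set-theoretic manipulation.

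**Step 1: The easy inclusion $\sigma_{DR}(\rho)\subseteq(\sigma_{DR}(A)-\sigma(B))\cup(\sigma(A)-\sigma_{DR}(B))$.** Equivalently I show that if $\lambda=\mu-\nu$ with $\mu\notin\sigma_{DR}(A)$ and $\nu\notin\sigma_{DR}(B)$ for the "right" decomposition, then $\lambda\notin\sigma_{DR}(\rho)$; more precisely I prove the contrapositive of the complement. Suppose $\lambda\notin(\sigma_{DR}(A)-\sigma(B))\cup(\sigma(A)-\sigma_{DR}(B))$. If $\lambda\notin\sigma(\rho)=\sigma(A)-\sigma(B)$ we are done. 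Otherwise $\lambda\in\sigma(\rho)$, and the hypothesis forces: for every representation $\lambda=\mu-\nu$ with $\mu\in\sigma(A)$, $\nu\in\sigma(B)$, we must have $\mu\in\Pi(A)$ and $\nu\in\Pi(B)$ (if some $\mu\in\sigma_{DR}(A)$ occurred with $\nu\in\sigma(B)$, then $\lambda$ would lie in $\sigma_{DR}(A)-\sigma(B)$; symmetrically for $B$). In particular $\mu,\nu$ are isolated in the respective spectra, so $\lambda\in\hbox{iso }\sigma(\rho)$, and running the first paragraph of the proof of Theorem \ref{thm3} together with Theorem \ref{thm2}(i) shows $\lambda\in\Pi(\rho)$, hence $\lambda\notin\sigma_{DR}(\rho)$.

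**Step 2: The reverse inclusion.** I show $(\sigma_{DR}(A)-\sigma(B))\cup(\sigma(A)-\sigma_{DR}(B))\subseteq\sigma_{DR}(\rho)$. Take $\lambda=\mu-\nu$ with, say, $\mu\in\sigma_{DR}(A)$ and $\nu\in\sigma(B)$ (the other case is symmetric by interchanging $A$ and $B$, noting that if $B$ is not Drazin invertible then $-\rho$, i.e. the generalized derivation $R_B-L_A$, plays the symmetric role, and $\sigma_{DR}$ is clearly invariant under negation up to the obvious relabeling). Then $\lambda\in\sigma(A)-\sigma(B)=\sigma(\rho)$, so it suffices to show $\lambda\notin\Pi(\rho)$. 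By Theorem \ref{thm3}(ii), $\Pi(\rho)=(\Pi(A)-\Pi(B))\setminus\sigma_{DR}(\rho)$; the point is that if $\lambda\in\Pi(\rho)$ it would admit a representation $\lambda=\mu'-\nu'$ with $\mu'\in\Pi(A)$, $\nu'\in\Pi(B)$. But the uniqueness part of the spectral-decomposition argument in the first paragraph of the proof of Theorem \ref{thm3} says that the pairs $(\mu_i,\nu_i)$ realizing an isolated point of $\sigma(\rho)$ are uniquely determined; since $\mu\in\sigma_{DR}(A)\cap\hbox{iso }\sigma(A)$ means $\mu$ is an isolated point of $\sigma(A)$ that is not a pole, $\mu$ cannot equal any $\mu'\in\Pi(A)$ appearing in such a representation — contradiction. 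Hence $\lambda\notin\Pi(\rho)$, so $\lambda\in\sigma(\rho)\setminus\Pi(\rho)=\sigma_{DR}(\rho)$.

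**Main obstacle.** The delicate point is Step 2: I must be careful about what "$\lambda\in\Pi(\rho)$ forces a good representation" actually means. The safest route is to avoid invoking uniqueness of the spectral pairs and instead argue directly: if $\lambda\in\Pi(\rho)$ then $\lambda\in\hbox{iso }\sigma(\rho)=(\hbox{iso }\sigma(A)-\hbox{iso }\sigma(B))\setminus\hbox{acc }\sigma(\rho)$, and by the analysis in Theorem \ref{thm3} the restriction of $\rho-\lambda$ to the relevant summand $L(N_{1i},M_{1i})$ is the generalized derivation determined by $A_{1i}$ and $B_{1i}$ with $\sigma(A_{1i})=\{\mu_i\}$, $\sigma(B_{1i})=\{\nu_i\}$; by Theorem \ref{thm2}(ii), if $\mu_i\in I(A)$ then this restriction has $I(\cdot)=\{\lambda\}$, so $\lambda\in I(\rho)$, contradicting $\lambda\in\Pi(\rho)$. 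Since $\mu\in\sigma_{DR}(A)$ and $\mu\in\hbox{iso }\sigma(A)$ give $\mu\in I(A)$, and $\mu$ must be one of the finitely many $\mu_i$'s realizing $\lambda$, this is exactly the needed contradiction. Everything else is bookkeeping with the set identities from Remark \ref{rem1} and Theorem \ref{thm3}.
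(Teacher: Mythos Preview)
Your argument is correct, but it takes a noticeably different route from the paper's own proof.

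The paper's proof is a two-line computation: it uses the general identity $\sigma_{DR}(T)=I(T)\cup\hbox{acc }\sigma(T)$ (from \cite[Theorem 4]{K}) applied to $T=\rho$, then simply plugs in the characterization of $I(\rho)$ from Theorem \ref{thm3}(i) and the formula for $\hbox{acc }\sigma(\rho)$ from Remark \ref{rem1}(ii). Applying the same identity to $A$ and $B$ and doing elementary set algebra gives the result immediately; no double inclusion is needed.

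Your approach instead proves the two inclusions separately, essentially re-deriving the content of Theorem \ref{thm3} inside the argument. Step 1 is clean. In Step 2 your first version (invoking ``uniqueness'' of the representation and Theorem \ref{thm3}(ii)) is shaky as written, because Theorem \ref{thm3}(ii) only tells you that $\Pi(\rho)\subseteq\Pi(A)-\Pi(B)$, not that \emph{every} representation of a point in $\Pi(\rho)$ comes from poles. You correctly identify this in the ``Main obstacle'' paragraph and repair it by arguing directly via the spectral decomposition and Theorem \ref{thm2}(ii): if $\lambda\in\Pi(\rho)$ then $\lambda\in\hbox{iso }\sigma(\rho)$, which forces $\mu\in\hbox{iso }\sigma(A)$ (since $\mu\in\hbox{acc }\sigma(A)$ would give $\lambda\in\hbox{acc }\sigma(\rho)$), so $\mu\in\sigma_{DR}(A)\cap\hbox{iso }\sigma(A)=I(A)$; then the restriction of $\rho-\lambda$ to the summand $L(N_{1i},M_{1i})$ with $\mu_i=\mu$ is not nilpotent by Theorem \ref{thm2}(ii), contradicting $\lambda\in\Pi(\rho)$. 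That repaired argument is correct.

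In short: your proof works, but the paper's use of $\sigma_{DR}=I\cup\hbox{acc }\sigma$ as an organizing identity makes the argument almost automatic once Theorem \ref{thm3}(i) and Remark \ref{rem1}(ii) are in hand. Your approach buys nothing extra here and duplicates effort already packaged in Theorem \ref{thm3}.
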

\begin{proof} Recall that according to \cite[Theorem 4]{K}, $\sigma_{DR} (\rho)= I(\rho)\cup \hbox{ acc }\sigma (\rho)$.
To conclude the proof, apply Theorem \ref{thm3}(i) and the fact that 
$\hbox{acc }\sigma (\rho)=(\hbox{acc }\sigma(A)-\sigma (B))\cup
(\sigma (A)-\hbox{acc } \sigma (B)$.
\end{proof}

\section {\sfstp Browder's theorems}\setcounter{df}{0}

\
\indent In this section Browder type theorems will be studied  for generalized derivations.
Recall that Browder's theorem (respectivley $a$-Browder's theorem)
is equivalent to generalized Browder's theorem (respectively to generalized
$a$-Browder's theorem), see \cite[Theorems 2.1-2.2]{AZ}. In first place,
(generalized) Browder's theorem will be considered.\par
\pagestyle{myheadings} \markboth{  }{ \rm \hskip5truecm Browder's and Weyl's Theorems}

\begin{thm}\label{thm41}Let $\X$ and $\Y$ be two Banach spaces and consider
$A\in L(\X)$ and  $B\in L(\Y)$ such that (generalized) Browder's theorem holds for 
$A$ and $B$. If $\rho\colon L(\Y,\X)\to L(\Y,\X)$ is the generalized derivation defined by
$A$ and $B$, then the following statements are equivalent.\par
\noindent \rm (i) \it (generalized) Browder's theorem holds for $\rho$. \par
\noindent \rm (ii) \it $(\hbox{\rm acc }\sigma (A)-\sigma (B))\cup (\sigma(A)- \hbox{\rm acc }\sigma (B))\subseteq \sigma_w (\rho)$.\par
\noindent \rm (iii) \it $ \sigma_w (\rho)= (\sigma_w (A)-\sigma (B))\cup(\sigma (A)-\sigma_w (B))$.\par
\noindent \rm (iv) \it $(\hbox{\rm acc }\sigma (A)-\sigma (B))\cup (\sigma(A)- \hbox{\rm acc }\sigma (B))\subseteq \sigma_{BW} (\rho)$.\par
\noindent \rm (v) \it $\sigma_{BW} (\rho)= (\sigma_{BW} (A)-\sigma (B))\cup(\sigma (A)-\sigma_{BW} (B))$.\par
\noindent \rm (vi)\it Given $\lambda\in \sigma (\rho)\setminus\sigma_w (\rho)$, for all $\mu\in \sigma (A)$ and $\nu\in \sigma (B)$
such that $\lambda=\mu-\nu$, $\mu\in \Phi (A)$, $\nu\in \Phi (B)$, $A$ has the SVEP at $\mu$ and $B^*$ has the SVEP at $\nu$.
\end{thm}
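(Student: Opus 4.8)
The plan is to run every equivalence through a single pivot: for an arbitrary bounded operator $T$, Browder's theorem holds for $T$ if and only if $\sigma_w(T)=\sigma_b(T)$, and by \cite{AZ} this is equivalent to generalized Browder's theorem, i.e.\ to $\sigma_{BW}(T)=\sigma_{DR}(T)$ (recall $\sigma_{DR}(T)=\sigma(T)\setminus\Pi(T)$). Besides Remark \ref{rem1} and Theorem \ref{thm4}, I would use the identity $\sigma_b(T)=\sigma_e(T)\cup\hbox{acc }\sigma(T)$ recalled in Section 2 together with its generalized counterpart $\sigma_{DR}(T)=\sigma_{BW}(T)\cup\hbox{acc }\sigma(T)$; the latter holds because a point of $\sigma_{DR}(T)$ lying outside $\hbox{acc }\sigma(T)$ is isolated in $\sigma(T)$, hence $T$ and $T^*$ have SVEP there, so if it were also outside $\sigma_{BW}(T)$ the operator $T-\lambda$ would be B-Weyl and, by that SVEP, of finite ascent and descent, hence Drazin invertible, a contradiction.

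First, the equivalences not involving (vi). Since $\sigma_e(\rho)\subseteq\sigma_w(\rho)\subseteq\sigma_b(\rho)=\sigma_e(\rho)\cup\hbox{acc }\sigma(\rho)$, one has $\sigma_w(\rho)=\sigma_b(\rho)$ if and only if $\hbox{acc }\sigma(\rho)\subseteq\sigma_w(\rho)$; by the formula for $\hbox{acc }\sigma(\rho)$ in Remark \ref{rem1}(ii) this is precisely (ii), so (i)$\Leftrightarrow$(ii), and the same argument with $(\sigma_{BW},\sigma_{DR})$ in place of $(\sigma_w,\sigma_b)$ gives (i)$\Leftrightarrow$(iv). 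For (i)$\Leftrightarrow$(iii): by hypothesis $\sigma_w(A)=\sigma_b(A)$ and $\sigma_w(B)=\sigma_b(B)$, so Remark \ref{rem1}(iii) yields $(\sigma_w(A)-\sigma(B))\cup(\sigma(A)-\sigma_w(B))=(\sigma_b(A)-\sigma(B))\cup(\sigma(A)-\sigma_b(B))=\sigma_b(\rho)$, and thus (iii) merely asserts $\sigma_w(\rho)=\sigma_b(\rho)$. Likewise, using $\sigma_{BW}(A)=\sigma_{DR}(A)$, $\sigma_{BW}(B)=\sigma_{DR}(B)$ and Theorem \ref{thm4}, the set in (v) equals $\sigma_{DR}(\rho)$, so (v) reads $\sigma_{BW}(\rho)=\sigma_{DR}(\rho)$; hence (i)$\Leftrightarrow$(v).

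Now (vi). I would first observe that its two Fredholm requirements are automatic: if $\lambda\notin\sigma_w(\rho)$ then $\lambda\notin\sigma_e(\rho)=(\sigma(A)-\sigma_e(B))\cup(\sigma_e(A)-\sigma(B))$, so every factorization $\lambda=\mu-\nu$ with $\mu\in\sigma(A)$, $\nu\in\sigma(B)$ forces $\mu\notin\sigma_e(A)$ and $\nu\notin\sigma_e(B)$, i.e.\ $\mu\in\Phi(A)$ and $\nu\in\Phi(B)$. Thus (vi) is equivalent to: for every $\lambda\in\sigma(\rho)\setminus\sigma_w(\rho)$ and every such $\mu,\nu$, $A$ has SVEP at $\mu$ and $B^*$ has SVEP at $\nu$. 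On the other hand, Browder's theorem holds for $\rho$ exactly when $\rho$ has SVEP at every point outside $\sigma_w(\rho)$ (\cite[Chapter 3]{A}). Hence (i)$\Leftrightarrow$(vi) will follow from the transfer principle: for $\lambda\in\sigma(\rho)$, the generalized derivation $\rho=L_A-R_B$ has SVEP at $\lambda$ if and only if, for all $\mu\in\sigma(A)$ and $\nu\in\sigma(B)$ with $\mu-\nu=\lambda$, $A$ has SVEP at $\mu$ and $B^*$ has SVEP at $\nu$ — proved using that $L_A$ has SVEP at $\mu$ iff $A$ does, $R_B$ has SVEP at $\nu$ iff $B^*$ does, and the way SVEP of the commuting difference $L_A-R_B$ localizes over the spectral values of the pair $(L_A,R_B)$. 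This transfer step is the technical core and the one I expect to be the main obstacle; it would be extracted from the local spectral theory of generalized derivations (cf.\ \cite{F, FB} and the references therein).

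If one prefers to stay inside the Fredholm framework of Section 3, the implication (i)$\Rightarrow$(vi) can be done by hand: assuming $\sigma_w(\rho)=\sigma_b(\rho)$ and taking $\lambda\in\sigma(\rho)\setminus\sigma_w(\rho)=\sigma(\rho)\setminus\sigma_b(\rho)$, Remark \ref{rem1}(iii) forces $\mu\notin\sigma_b(A)$ and $\nu\notin\sigma_b(B)$ for every factorization $\lambda=\mu-\nu$, so $A-\mu$ and $B-\nu$ are Browder, whence $\mu\in\Phi(A)$, $A$ has SVEP at $\mu$ (finite ascent), $\nu\in\Phi(B)$, and $B^*$ has SVEP at $\nu$ (because $\textrm{asc}(B^*-\nu)=\textrm{dsc}(B-\nu)<\infty$). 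The reverse implication (vi)$\Rightarrow$(i) is the genuine difficulty: under (vi) one must exclude the existence of $\lambda\in\hbox{acc }\sigma(\rho)\setminus\sigma_w(\rho)$, and the two routes I see are either the SVEP transfer lemma above or an index computation for the Fredholm operator $\rho-\lambda$ based on the finiteness of the set $\{(\mu,\nu)\in\sigma(A)\times\sigma(B):\mu-\nu=\lambda\}$ when $\rho-\lambda$ is Fredholm, a finiteness/index lemma of the kind used in \cite{BDJ, DDK}.
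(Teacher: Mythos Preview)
Your treatment of the equivalences (i)--(v) is exactly the paper's: reduce Browder's theorem to $\sigma_w(\rho)=\sigma_b(\rho)$ and its generalized form to $\sigma_{BW}(\rho)=\sigma_{DR}(\rho)$, then read off (ii)--(v) from the formulas in Remark \ref{rem1} and Theorem \ref{thm4}. Nothing to add there.

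For (vi), you offer two routes. The paper takes your second one. For (i)$\Rightarrow$(vi) the paper argues via isolated points: $\lambda\in\Pi_0(\rho)\subseteq\hbox{iso }\sigma(\rho)$ forces $\mu\in\hbox{iso }\sigma(A)$, $\nu\in\hbox{iso }\sigma(B)$, giving SVEP trivially; your variant via the $\sigma_b(\rho)$ formula is equally valid and arguably cleaner. For (vi)$\Rightarrow$(i), the paper carries out precisely the ``index computation based on finiteness of the fibre'' that you anticipate, but the source is Eschmeier \cite{E} rather than \cite{BDJ,DDK}: \cite[Lemma 4.1]{E} gives finiteness of $M_\lambda=\{(\mu,\nu):\mu-\nu=\lambda\}$ when $\rho-\lambda$ is Fredholm, and \cite[Theorem 4.2]{E} provides an index identity relating $\hbox{ind}(\rho-\lambda)=0$ to a weighted sum of $\hbox{ind}(A-\mu_k)$ and $\hbox{ind}(B-\nu_k)$. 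The SVEP hypotheses in (vi) then give sign constraints $\hbox{ind}(A-\mu_k)\le 0$, $\hbox{ind}(B-\nu_k)\ge 0$ via \cite[Theorem 3.19]{A}, forcing all indices to vanish and placing each $\mu_k\in\Pi_0(A)$, $\nu_k\in\Pi_0(B)$, whence $\lambda\in\Pi_0(\rho)$.

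Your first route, the SVEP transfer principle for $\rho=L_A-R_B$, is a genuinely different idea and is not used in the paper. You are right to flag it as the main obstacle: the direction you actually need (SVEP of $A$ at every $\mu$ and of $B^*$ at every $\nu$ in the fibre implies SVEP of $\rho$ at $\lambda$) is not, to my knowledge, available off the shelf in \cite{F,FB} in this generality, and making it precise would likely force you back to a finiteness argument for the fibre anyway. The index approach buys you a self-contained argument with exactly the tools already assembled in the paper.
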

 \begin{proof} Recall that according to \cite[Proposition 2]{Ba}, $\rho\in L(L(\Y,\X))$  satisfies Browder's theorem if and only if  acc $\sigma (\rho)\subseteq \sigma_w (\rho)$. 
In particular, according to Remark \ref{rem1}(ii), statements (i) and (ii) are equivalent.\par

\indent Note that according again  to \cite[Proposition 2]{Ba}, Browder's theorem is equivalent to the identity $\sigma_w (\rho)=\sigma_b (\rho)$.
Therefore, according to Remark \ref{rem1}(iii), 
$$
\sigma_b(\rho)= (\sigma_w (A)-\sigma (B))\cup(\sigma (A)-\sigma_w (B)).
$$
\noindent As a result, Browder's theorem holds for $\rho$ if and only if statement (iii) holds.\par
 
\indent According to \cite[Theorem 2.3]{B5}, necessary and sufficient for $\rho\in L(L(\Y, \X))$ to satisfy generalized Browder's theorem is the fact that
 acc $\sigma (\rho)\subseteq \sigma_{BW} (\rho)$. Consequently, according to Remark \ref{rem1}(ii), statements (i) and (iv) are equivalent.\par

\indent Applying again \cite[Theorem 2.3]{B5}, it is not difficult to prove that  generalized Browder's theorem is equivalent to
the fact that $\sigma_{BW} (\rho)= \sigma_{DR} (\rho)$. In particular, according to Theorem \ref{thm4},
$$
\sigma_{DR} (\rho)=(\sigma_{BW} (A)-\sigma (B))\cup (\sigma (A)-\sigma_{BW} (B)).
$$
\noindent Consequently, (generalized) Browder's theorem is equivalent to statement (v).\par

\indent Next suppose that Browder's theorem holds for $\rho\in L(L(\Y,\X))$ and consider $\lambda\in \sigma (\rho)\setminus\sigma_w (\rho)$.
Let $\mu\in \sigma (A)$ and $\nu\in\sigma (B)$ such that $\lambda=\mu-\nu$. Since $\sigma_w (\rho)=\sigma_b(\rho)$,
$\lambda\in\Pi_0 (\rho)\subseteq$ iso $ \sigma (\rho)$. In particular, 
$\mu\in $ iso $\sigma (A)$ and $\nu\in $ iso $\sigma (B)=\hbox{ iso }\sigma (B^*)$,
which implies that $A$ has the SVEP at $\mu$
and $B^*$ has the SVEP at $\nu$. In addition,
since $\sigma_e (\rho)\subseteq\sigma_b(\rho)$, according to \cite[Corollary 3.4]{E},
$\mu\in\Phi (A)$ and $\nu\in \Phi (B)$. \par

\indent On the other hand, suppose that
statement (vi) holds. Let $\lambda\in\sigma (\rho)\setminus\sigma_w (\rho)$
and consider $M_{\lambda}=\{ (\mu, \nu)\in\sigma (A)\times \sigma (B)\colon
\lambda=\mu-\nu\}$. According to \cite[Lemma 4.1]{E} applied to the 
two tuple of operators $M=(L_A ,R_B)\in L(L(\Y, \X))^2$ and the polynomial mapping $P\colon \mathbb{C}^2\to\mathbb{C}$,
$P(X,Y)=X-Y$, $M_{\lambda}$ is a finite set. Let $m\in\mathbb N$
such that $M_{\lambda}=\{ (\mu_i, \nu_i)\colon 1\le i\le m\}$.
Then, according to the paragraphs before \cite[Theorem 4.2]{E}, 
there is $n\in \{0,\ldots ,m\}$ such that:
$$
\hbox{\rm (i) } \mu_i\in \hbox{ \rm iso } \sigma (A)
(1\le i\le n); \hbox{ \rm (ii) if } n<m, \hbox{ then } \nu_i\in \hbox{\rm iso } \sigma (B) (n+1\le i\le m).
$$
Moreover, since ind $(\rho-\lambda)=0$, according to  \cite[Theorem 4.2]{E} applied to
$M$ and $P$,
$$
\sum_{k=n+1}^m (\hbox{dim } X_B(\nu_k))\hbox{ind} (A-\mu_k)=
 \sum_{k=1}^n (\hbox{dim } X_A(\mu_k))\hbox{ind} (B-\nu_k),
$$
where $X_A(\mu_i)$  (respectively  $X_B(\nu_i))$ 
is the spectral subspace of $A$ (respectively $B$)
associated to the isolated point $\mu_i$ $(1\le i\le n)$ (respectively $\nu_i$ $(n+1\le i\le m)$).\par

\indent Note that since $\mu_i\in\Phi (A)\cap\hbox{\rm iso } \sigma (A)$
(respectively $\nu_i\in\Phi (B)\cap\hbox{\rm iso } \sigma (B))$,
$\mu_i\in \Pi_0 (A)$ (respectively $\nu_i\in \Pi_0(B)$) $(1\le i\le n)$
(respectively $(n+1\le i\le m)$). In addition, according to \cite[Theorem 1.52]{Do},
$0<\hbox{dim } X_A(\mu_i)<\infty$ $(1\le i\le n)$ and  $0<  \hbox{dim } X_B(\nu_i)<\infty$
$(n+1\le i\le m)$.\par
\indent Now well, since $\mu_i\in\Phi (A)$ and $A$ has the SVEP at $\mu_i$, according to \cite[Theorem 3.19(i)]{A}, ind$(A-\mu_i)\le 0$
$(n+1\le i\le m)$. Similarly, since $\nu_i\in\Phi (B)$ and $B^*$ has the SVEP at $\nu_i$, according to \cite[Theorem 3.19(ii)]{A}, ind$(B-\nu_i)\ge 0$
$(1\le i\le n)$. Then, using the identity derived from \cite[Theorem 4.2]{E}, it is not
difficult to prove that  ind$(A-\mu_i)= 0$ $(n+1\le i\le m)$ and ind$(B-\nu_i)=0$ $(1\le i\le n)$.
Therefore, $\mu_i\in \Pi_0(A)$ and $\nu_i\in\Pi_0 (B)$ ($1\le i\le m$), which implies that $\lambda\in\Pi_0 (\rho)$
(Remark \ref{rem1}(iii)). As a result, $\sigma_b(\rho)\subseteq \sigma_w (\rho)$, equivalently 
 $\sigma_b(\rho)= \sigma_w (\rho)$.
\end{proof}

\indent Next (generalized) $a$-Browder theorem will be studied.\par
\pagestyle{myheadings} \markboth{  }{ \hskip5truecm \rm E. Boasso and M. Amouch  }
\begin{thm}\label{thm42}Let $\X$ and $\Y$ be two Banach spaces and consider
$A\in L(\X)$ and $B\in L(\Y)$ such that (generalized) $a$-Browder's theorem holds for 
$A$ and $B^*$. If $\rho\colon L(\Y,\X)\to L(\Y,\X)$ is the generalized derivation defined by
$A$ and $B$, then the following statements are equivalent.\par
\noindent \rm (i) \it (generalized) $a$-Browder's theorem holds for $\rho$. \par
\noindent \rm (ii) \it $(\hbox{\rm acc }\sigma_a (A)-\sigma_a (B^*))\cup (\sigma_a (A)- \hbox{\rm acc }\sigma_a (B^*))\subseteq \sigma_{aw} (\rho)$.\par
\noindent \rm (iii) \it $ \sigma_{aw} (\rho)= (\sigma_{aw} (A)-\sigma_a (B^*))\cup(\sigma_a (A)-\sigma_{aw} (B^*))$.\par
\noindent \rm (iv) \it $(\hbox{\rm acc }\sigma_a (A)-\sigma_a (B^*))\cup (\sigma_a (A)- \hbox{\rm acc }\sigma_a (B^*)) \subseteq \sigma_{SBF_+^-} (\rho)$.\par
\noindent \rm (v) \it Given $\lambda\in \sigma_a (\rho)\setminus\sigma_{aw} (\rho)$, for all $\mu\in \sigma_a (A)$ and $\nu\in \sigma_a (B^*)$
such that $\lambda=\mu-\nu$, $\mu\in \Phi_+ (A)$, $\nu\in \Phi_+ (B^*)$, $A$ has the SVEP at $\mu$ and $B^*$ has the SVEP at $\nu$.
\end{thm}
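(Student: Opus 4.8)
The plan is to mirror almost verbatim the structure of the proof of Theorem~\ref{thm41}, replacing the spectrum by the approximate point spectrum, the Weyl spectrum by the Weyl essential approximate point spectrum, the Browder spectrum by the Browder essential approximate point spectrum $\sigma_{ab}$, the B-Weyl spectrum by $\sigma_{SBF_+^-}$, and the characterizations in Remark~\ref{rem1} by their approximate analogues in Proposition~\ref{pro6}. The overall pattern is: (i)$\Leftrightarrow$(ii) via the known reformulation of $a$-Browder's theorem as $\mathrm{acc}\,\sigma_a(\rho)\subseteq\sigma_{aw}(\rho)$; (i)$\Leftrightarrow$(iii) via the reformulation as $\sigma_{aw}(\rho)=\sigma_{ab}(\rho)$ together with Proposition~\ref{pro6}(v); (i)$\Leftrightarrow$(iv) via the reformulation of generalized $a$-Browder's theorem as $\mathrm{acc}\,\sigma_a(\rho)\subseteq\sigma_{SBF_+^-}(\rho)$ combined with Proposition~\ref{pro6}(iii); and finally (i)$\Leftrightarrow$(v), which is the substantive implication and the one requiring care.

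First I would quote \cite[Theorem 2.2]{AZ} to note that $a$-Browder's and generalized $a$-Browder's theorems are equivalent, so one may freely pass between them. For the easy equivalences I would recall that $a$-Browder's theorem for $\rho$ is equivalent to $\mathrm{acc}\,\sigma_a(\rho)\subseteq\sigma_{aw}(\rho)$ (an approximate-point analogue of \cite[Proposition 2]{Ba}; this is standard, e.g.\ in \cite{A}) and, in the ``$=$'' form, to $\sigma_{aw}(\rho)=\sigma_{ab}(\rho)$. Then (i)$\Leftrightarrow$(ii) is immediate from Proposition~\ref{pro6}(iii), since $\mathrm{acc}\,\sigma_a(\rho)=(\mathrm{acc}\,\sigma_a(A)-\sigma_a(B^*))\cup(\sigma_a(A)-\mathrm{acc}\,\sigma_a(B^*))$. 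For (i)$\Leftrightarrow$(iii): when $\sigma_{aw}(\rho)=\sigma_{ab}(\rho)$, Proposition~\ref{pro6}(v) gives $\sigma_{aw}(\rho)=(\sigma_{ab}(A)-\sigma_a(B^*))\cup(\sigma_a(A)-\sigma_{ab}(B^*))$, and since $a$-Browder's theorem holds for $A$ and $B^*$ we have $\sigma_{ab}(A)=\sigma_{aw}(A)$ and $\sigma_{ab}(B^*)=\sigma_{aw}(B^*)$, yielding (iii); conversely (iii) forces $\sigma_{aw}(\rho)\supseteq\mathrm{acc}\,\sigma_a(\rho)$ as in Theorem~\ref{thm41}. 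For (i)$\Leftrightarrow$(iv) I would similarly use that generalized $a$-Browder's theorem for $\rho$ is equivalent to $\mathrm{acc}\,\sigma_a(\rho)\subseteq\sigma_{SBF_+^-}(\rho)$ and invoke Proposition~\ref{pro6}(iii).

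The main work is (i)$\Leftrightarrow$(v), and it is the mild adaptation of the corresponding part of Theorem~\ref{thm41} where I expect the real obstacle to lie. Assuming $a$-Browder's theorem for $\rho$, take $\lambda\in\sigma_a(\rho)\setminus\sigma_{aw}(\rho)$; then $\lambda\in\Pi_0^a(\rho)\subseteq\mathrm{iso}\,\sigma_a(\rho)$, and by Proposition~\ref{pro6}(iv) any decomposition $\lambda=\mu-\nu$ with $\mu\in\sigma_a(A)$, $\nu\in\sigma_a(B^*)$ has $\mu\in\mathrm{iso}\,\sigma_a(A)$ and $\nu\in\mathrm{iso}\,\sigma_a(B^*)$, giving SVEP of $A$ at $\mu$ and of $B^*$ at $\nu$; moreover $\lambda\notin\sigma_{SF_+}(\rho)\supseteq$ forces, via Proposition~\ref{pro6}(ii), $\mu\in\Phi_+(A)$ and $\nu\in\Phi_+(B^*)$. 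Conversely, assuming (v), fix $\lambda\in\sigma_a(\rho)\setminus\sigma_{aw}(\rho)$. Here the subtlety is that unlike in Theorem~\ref{thm41} I cannot directly invoke \cite[Theorem 4.2]{E}; instead I would argue as follows: by (v) every pair $(\mu,\nu)$ realizing $\lambda$ has $\mu\in\Phi_+(A)$ with $A$ having SVEP at $\mu$, hence $\mathrm{ind}(A-\mu)\le 0$ by \cite[Theorem 3.19(i)]{A} and $\mathrm{asc}(A-\mu)<\infty$, so $\mu\in\sigma_a(A)\setminus\sigma_{ab}(A)$ by \cite[Corollary 2.2]{R2}; since $a$-Browder holds for $A$, $\mu\notin\sigma_{aw}(A)$. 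Symmetrically $\nu\notin\sigma_{aw}(B^*)$. Then Proposition~\ref{pro6}(ii) (or rather its Weyl analogue built from it together with the SVEP/index information, as in \cite[Proposition 4.3]{BDJ}) gives $\lambda\notin\sigma_{aw}(\rho)$ only if I already know that — so instead the cleanest route is to show $\lambda\notin\sigma_{ab}(\rho)$ by Proposition~\ref{pro6}(v): since $\mu\notin\sigma_{ab}(A)$ and $\nu\notin\sigma_{ab}(B^*)$ for all realizing pairs, and $\sigma_{ab}(\rho)=(\sigma_{ab}(A)-\sigma_a(B^*))\cup(\sigma_a(A)-\sigma_{ab}(B^*))$, we get $\lambda\notin\sigma_{ab}(\rho)$, whence $\sigma_{ab}(\rho)\subseteq\sigma_{aw}(\rho)$ and $a$-Browder's theorem holds for $\rho$. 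The delicate point to get exactly right is that $\lambda\notin\sigma_{aw}(\rho)$ guarantees \emph{every} realizing pair lies outside $\sigma_{SF_+}$ of the respective operator (via Proposition~\ref{pro6}(ii), which expresses $\sigma_{SF_+}(\rho)$ as a difference-set), so that (v) can be applied uniformly; this uses that $\sigma_{SF_+}(\rho)\subseteq\sigma_{aw}(\rho)$.
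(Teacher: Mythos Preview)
Your proposal is correct and follows essentially the same approach as the paper's proof: the equivalences (i)$\Leftrightarrow$(ii), (i)$\Leftrightarrow$(iii), (i)$\Leftrightarrow$(iv) are handled exactly as the paper does (via the reformulations of $a$-Browder's theorem together with Proposition~\ref{pro6}(iii) and (v)), and your treatment of (v)$\Rightarrow$(i) is identical to the paper's final paragraph, concluding $\mu\notin\sigma_{ab}(A)$, $\nu\notin\sigma_{ab}(B^*)$ and applying Proposition~\ref{pro6}(v). The only minor variation is in (i)$\Rightarrow$(v): the paper deduces $\mu\notin\sigma_{ab}(A)$, $\nu\notin\sigma_{ab}(B^*)$ directly from statement (iii) and then reads off $\Phi_+$ and SVEP, whereas you obtain $\mu\in\hbox{iso }\sigma_a(A)$, $\nu\in\hbox{iso }\sigma_a(B^*)$ from Proposition~\ref{pro6}(iii)-(iv) and $\mu\in\Phi_+(A)$, $\nu\in\Phi_+(B^*)$ from Proposition~\ref{pro6}(ii); both routes are valid and equally short, and your detour through $\sigma_{aw}(A)$ in the converse is unnecessary but harmless.
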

\begin{proof} According to  \cite[Corollary 2.2]{R2} and \cite[Corollary 2.4]{R2}, necessary and sufficient for the $a$-Browder's theorem to hold for $\rho\colon L(\Y,\X)\to L(\Y,\X)$ is that
acc $\sigma_a (\rho)\subseteq \sigma_{aw}(\rho)$. In particular, to prove the equivalence between statements (i) and (ii), apply
Proposition \ref{pro6}(iii).\par
\indent Note that according to \cite[Corollary 2.2]{R2}, $a$-Browder's theorem 
holds for $\rho$ if and only if $\sigma_{aw} (\rho)=\sigma_{ab}(\rho)$. However, according to Proposition \ref{pro6}(v),
$\sigma_{ab}(\rho)=(\sigma_{aw} (A)-\sigma_a (B^*))\cup(\sigma_a (A)-\sigma_{aw} (B^*))$. Consequently, statements (i) and (iii)
are equivalent.\par
\indent Recall that, according to \cite[Theorem 2.8]{B6},  the generalized $a$-Browder's theorem holds for $\rho\in L(L(\Y,\X))$
if and only if acc $\sigma_a (\rho)\subseteq  \sigma_{SBF_+^-} (\rho)$. Therefore, to prove the equivalence between statements (i) and (iv), apply
Proposition \ref{pro6}(iii).\par

 \indent To prove the equivalence between statements (i) and (v), suppose that  $a$-Browder's theorem holds for $\rho$ and
consider $\lambda\in \sigma_a(\rho)\setminus \sigma_{aw} (\rho)$. Let  $\mu\in \sigma_a(A)$ and $\nu\in\sigma_a(B^*)$ such that
$\lambda=\mu-\nu$. Then, according to statement (iii), $\mu\in\sigma_a (A)\setminus\sigma_{ab} (A)$ and 
$\nu\in\sigma_a (B)\setminus\sigma_{ab} (B^*)$. In particular, according to \cite[Theorem 2.1]{R2},  $\mu\in \Phi_+ (A)$, $\nu\in \Phi_+ (B^*)$ and
asc $(A-\mu)$ and asc $(B^*-\nu)$ are finite, which implies that $A$ has the SVEP at $\mu$ and $B^*$ has the SVEP at $\nu$ (\cite[Theorem 3.8]{A}).\par

\indent Next suppose that statement (v) holds and consider $\lambda\in \sigma_a(\rho)
\setminus \sigma_{aw}(\rho)$. Then, for every $\mu\in \sigma_a(A)$ and $\nu\in\sigma_a(B^*)$ such that
$\lambda=\mu-\nu$, $\mu\in \Phi_+ (A)$, $\nu\in \Phi_+ (B^*)$ and $A$ has the SVEP at $\mu$ and $B^*$ has the SVEP at $\nu$.
According to \cite[Theorems 3.16 and 3.23]{A}, $\mu\in$ iso $\sigma_a(A)$, $\nu\in$ iso $\sigma_a(B^*)$ and asc $(A-\mu)$ and  
asc $(B^*-\mu)$ are finite. However, according to \cite[Corollary 2.2]{R2}, $\mu\in \sigma_a(A)\setminus\sigma_{ab}(A)$
and  $\nu\in \sigma_a(B^*)\setminus\sigma_{ab}(B^*)$. Thus, according to Propostition \ref{pro6}(v), $\lambda\notin\sigma_{ab}(\rho)$.
Hence, $\lambda\in \sigma_a(\rho) \setminus \sigma_{ab}(\rho)$, which implies that 
$\sigma_{ab}(\rho)\subseteq \sigma_{aw}(\rho)$, equivalently, $\sigma_{ab}(\rho)=\sigma_{aw}(\rho)$.
\end{proof} 

\section {\sfstp Weyl's theorems}\setcounter{df}{0}

\
\indent Before considering  Weyl type theorems for generalized derivations,
some preparation is needed. Recall first  that given $A\in L(X)$, $X$ a Banach space, $A$ is said to be \it isoloid \rm (respectively \it $a$-isoloid\rm), 
if iso $\sigma (A)=E(A)$ (respectively iso $\sigma_a (A)=E^a(A)$). These conditions transfer to generalized derivations. \par

\begin{pro}\label{pro55} Let $\X$ and $\Y$ be two Banach spaces and consider
$A\in L(\X)$ and $B\in L(\Y)$. Let $\rho\colon L(\Y,\X)\to L(\Y,\X)$ be the generalized derivation defined by
$A$ and $B$. Then, if $A$ and $B^*$ are isoloid (respectively
$a$-isoloid), $\rho\in L(L(\Y,\X))$ is isoloid (respectively  $a$-isoloid).
\end{pro}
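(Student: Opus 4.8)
Since $E(\rho)\subseteq\hbox{iso }\sigma(\rho)$ and $E^a(\rho)\subseteq\hbox{iso }\sigma_a(\rho)$ hold by definition, the plan is to prove only the reverse inclusions, that is, that every isolated point of $\sigma(\rho)$ (respectively of $\sigma_a(\rho)$) is an eigenvalue of $\rho$; this will be achieved by producing an explicit rank-one eigenvector. First, in the isoloid case, take $\lambda\in\hbox{iso }\sigma(\rho)$. By Remark \ref{rem1}(ii) there are $\mu\in\hbox{iso }\sigma(A)$ and $\nu\in\hbox{iso }\sigma(B)$ with $\lambda=\mu-\nu$; as $\sigma(B)=\sigma(B^*)$, also $\nu\in\hbox{iso }\sigma(B^*)$. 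Since $A$ is isoloid, $\mu\in E(A)$, so there is $0\neq x\in\X$ with $Ax=\mu x$; since $B^*$ is isoloid, $\nu\in E(B^*)$, so there is $0\neq f\in\Y^*$ with $B^*f=\nu f$, equivalently $f(Bz)=\nu f(z)$ for all $z\in\Y$.

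Next I would feed in the rank-one operator $U_{x,f}\in L(\Y,\X)$ defined by $U_{x,f}(z)=xf(z)$ --- the same device already used in the proof of Theorem \ref{thm2}(ii). A direct computation gives, for every $z\in\Y$,
$$
\rho(U_{x,f})(z)=A\bigl(xf(z)\bigr)-xf(Bz)=\mu f(z)\,x-\nu f(z)\,x=(\mu-\nu)\,U_{x,f}(z),
$$
so that $\rho(U_{x,f})=\lambda U_{x,f}$ with $U_{x,f}\neq 0$. Hence $0<\alpha(\rho-\lambda)$ and $\lambda\in E(\rho)$, which shows $\hbox{iso }\sigma(\rho)\subseteq E(\rho)$ and therefore that $\rho$ is isoloid.

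The $a$-isoloid case is then handled verbatim, replacing $\sigma$, $\hbox{iso }\sigma$ and $E$ throughout by $\sigma_a$, $\hbox{iso }\sigma_a$ and $E^a$: given $\lambda\in\hbox{iso }\sigma_a(\rho)$, Proposition \ref{pro6}(iv) supplies $\mu\in\hbox{iso }\sigma_a(A)$ and $\nu\in\hbox{iso }\sigma_a(B^*)$ with $\lambda=\mu-\nu$, the $a$-isoloidness of $A$ and of $B^*$ yields $\mu\in E^a(A)$ and $\nu\in E^a(B^*)$, hence eigenvectors $x$ and $f$ exactly as above, and the same operator $U_{x,f}$ shows $\lambda\in E^a(\rho)$. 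I do not anticipate a real obstacle here: the only things to verify are that the set-theoretic descriptions of $\hbox{iso }\sigma(\rho)$ and of $\hbox{iso }\sigma_a(\rho)$ genuinely produce isolated spectral points $\mu$ and $\nu$ of $A$ and $B^*$ with $\lambda=\mu-\nu$, and that the hypotheses ``isoloid''/``$a$-isoloid'' are precisely the assertions that such isolated points are eigenvalues; once this is in place the eigenvector computation for $U_{x,f}$ is immediate.
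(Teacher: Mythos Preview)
Your proposal is correct and follows essentially the same approach as the paper: pick $\lambda\in\hbox{iso }\sigma(\rho)$, use Remark~\ref{rem1}(ii) (respectively Proposition~\ref{pro6}(iv)) to write $\lambda=\mu-\nu$ with $\mu,\nu$ isolated, invoke isoloidness of $A$ and $B^*$ to get eigenvectors $x$ and $f$, and verify that the rank-one operator $U_{x,f}$ is an eigenvector of $\rho$. The only difference is that you spell out the eigenvector computation explicitly, whereas the paper leaves it as ``an easy calculation''.
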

\begin{proof} Given $\lambda\in $ iso $\sigma (\rho)$, according to Remark \ref{rem1}(ii), there exist $\mu\in$
iso $\sigma (A)$ and $\nu\in $ iso $\sigma (B)=$ iso $\sigma(B^*)$ such that $\lambda=\mu-\nu$. Since
$A$ and $B^*$ are isoloid, there are $x\in \X$, $x\neq 0$, and $f\in \Y^*$, $f\neq 0$, such that
$A(x)=\mu x$ and $B^*(f)=\nu f$. Consider the operator $U_{x,f}\in L(\Y,\X)$ defined in the proof of Theorem \ref{thm2}(ii).
Then, an easy calculation proves that $\rho (U_{x,f})=\lambda U_{x,f}$. Therefore, iso $\sigma (\rho)= E (\rho)$.\par

\indent A similar argument, using in particular Proposition \ref{pro6}(iv), proves the $a$-isoloid case.
\end{proof}
\pagestyle{myheadings} \markboth{  }{ \rm \hskip5truecm Browder's and Weyl's Theorems}

\indent Next  (generalized) Weyl's theorem for generalized derivations will be studied.
However, first some fact need to be recalled.\par

\begin{rema}\label{rem65}\rm Let $\X$ be a Banach space and consider
$A\in L(\X)$ and $A^*\in L(\X^*)$. Then, the following statements are equivalent.\par
\noindent (i) $A$ is isoloid and generalized Weyl's theorem holds for $A$.\par
\noindent (ii) $A$ is polaroid and generalized Browder's theorem holds for $A$.\par
\noindent (iii) $A^*$ is polaroid and generalized Browder's theorem holds for $A^*$.\par
\noindent (iv) $A^*$ is isoloid and generalized Weyl's theorem holds for $A^*$.\par
\noindent (v) $A$ is polaroid and Weyl's theorem holds for $A$.\par
\noindent (vi) $A$ is polaroid and Browder's theorem holds for $A$.\par
\noindent (vii) $A^*$ is polaroid and  Browder's theorem holds for $A^*$.\par
\noindent (viii) $A^*$ is polaroid and Weyl's theorem holds for $A^*$.\par
\indent The proof can be easily derived from the notions involved in the statements and
from well known results. Although the details are left to the reader, some indications will be given. \par
\indent To prove the equivalence between statements (i) and (ii),
use \cite[Corollary 2.6]{B5}. The equivalence between statements (ii) and (iii) can be proved using \cite[Theorem 2.8(iii)]{A1} and \cite[Remark B]{B00}.
To prove the equivalence between statements (iii) and (iv), apply what has been proved. According to \cite[Theorem 2.1]{AZ},
statements (ii) and (vi) (respectively statements (iii) and (vii)) are equivalents. Finally, according to \cite[Theorem 2.2]{Du},
statements (v) and (vi) (respectively, statements (vii) and (viii)) are equivalent.
\end{rema}

\begin{thm}\label{thm75}Let $\X$ and $\Y$ be two Banach spaces and consider
$A\in L(\X)$ and $B\in L(\Y)$ such that $A$ and $B^*$ are isoloid and generalized Weyl's theorem holds for 
$A$ and $B^*$. If $\rho\colon L(\Y,\X)\to L(\Y,\X)$ is the generalized derivation defined by
$A$ and $B$, then the following statements are equivalent.\par
\noindent \rm (i) \it generalized Weyl's theorem holds for $\rho\in L(L(\Y,\X))$.\par
\noindent \rm (ii) \it (generalized) Browder's theorem holds for $\rho\in L(L(\Y,\X))$.\par
\noindent \rm (iii) \it (generalized) Browder's theorem holds for $\rho^*\in L(L(\Y,\X)^*)$.\par
\noindent \rm (iv) \it generalized Weyl's theorem holds for $\rho^*\in L(L(\Y,\X)^*)$.\par
\noindent \rm (v) \it Weyl's theorem holds for $\rho\in L(L(\Y,\X))$.\par
\noindent \rm (vi) \it Weyl's theorem holds for $\rho^*\in L(L(\Y,\X)^*)$.\par
\end{thm}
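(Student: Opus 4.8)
The plan is to exploit the equivalences packaged in Remark \ref{rem65} together with the transfer results already proved in sections 3 and 4. First I would observe that, by hypothesis, $A$ and $B^*$ are isoloid and satisfy generalized Weyl's theorem, so by the equivalence (i)$\Leftrightarrow$(ii) of Remark \ref{rem65} applied to $A$ and to $B^*$, both $A$ and $B^*$ are polaroid and satisfy generalized Browder's theorem. Since $B^*$ polaroid is equivalent to $B$ polaroid (the spectra and their isolated points coincide, $\sigma(B)=\sigma(B^*)$ and $\Pi(B)=\Pi(B^*)$), and similarly generalized Browder's theorem for $B^*$ is equivalent to generalized Browder's theorem for $B$ by Remark \ref{rem65}(ii)$\Leftrightarrow$(iii) applied to $B$, the hypotheses of Theorem \ref{thm41} are met: $A$ and $B$ are polaroid operators satisfying (generalized) Browder's theorem.

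Next I would invoke Theorem \ref{thm46} to conclude that $\rho$ is polaroid. The crux is then to show that statement (ii), ``(generalized) Browder's theorem holds for $\rho$'', forces statement (i), ``generalized Weyl's theorem holds for $\rho$'', and conversely; once that biconditional is in hand the remaining equivalences (ii)$\Leftrightarrow$(iii)$\Leftrightarrow\cdots\Leftrightarrow$(vi) follow immediately by feeding ``$\rho$ is polaroid'' into Remark \ref{rem65} (with $A$ there replaced by $\rho$, noting that $\rho^*\in L(L(\Y,\X)^*)$ plays the role of $A^*$). For the biconditional (i)$\Leftrightarrow$(ii): the implication (i)$\Rightarrow$(ii) is trivial since generalized Weyl's theorem always implies generalized Browder's theorem. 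For (ii)$\Rightarrow$(i): assume generalized Browder's theorem holds for $\rho$, i.e. $\sigma_{BW}(\rho)=\sigma(\rho)\setminus\Pi(\rho)$; since $\rho$ is polaroid, $\Pi(\rho)=\hbox{iso }\sigma(\rho)$, and one checks that $E(\rho)\subseteq\hbox{iso }\sigma(\rho)=\Pi(\rho)$ while also $\Pi(\rho)\subseteq E(\rho)$ because every pole is an eigenvalue (ascent and descent finite and nonzero spectral point forces $0<\alpha(\rho-\lambda)$); hence $E(\rho)=\Pi(\rho)$ and $\sigma_{BW}(\rho)=\sigma(\rho)\setminus E(\rho)$, which is generalized Weyl's theorem. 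This is precisely the content of the equivalence (i)$\Leftrightarrow$(ii) of Remark \ref{rem65} read for the operator $\rho$.

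Putting it together: I would state that $A$ and $B$ polaroid satisfying generalized Browder's theorem $\Rightarrow$ $\rho$ polaroid (Theorem \ref{thm46}) and $\rho$ satisfying generalized Browder's theorem iff $\rho$ satisfies generalized Weyl's theorem iff Weyl's theorem holds for $\rho$ (Remark \ref{rem65}, (ii)$\Leftrightarrow$(i)$\Leftrightarrow$(v) applied to $\rho$); and, passing to $\rho^*$, since $\rho$ polaroid the chain (v)$\Leftrightarrow$(vi)$\Leftrightarrow$(vii)$\Leftrightarrow$(viii) of Remark \ref{rem65} gives that Weyl's, generalized Weyl's and (generalized) Browder's theorems for $\rho^*$ are all equivalent to the polaroid-plus-Browder condition, hence to statements (i)--(vi). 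The one subtlety to handle carefully is that Remark \ref{rem65} is stated with hypotheses on $A$ \emph{and} $A^*$ but delivers equivalences among eight conditions; here one must be sure that the polaroid property of $\rho$ (already established unconditionally from the hypotheses) is exactly the ``free'' ingredient that makes every one of (i)--(vi) equivalent, so that no further assumption on $\rho^*$ is needed.

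The main obstacle I anticipate is purely bookkeeping: correctly matching the primed and unprimed roles — our hypotheses are phrased in terms of $A$ and $B^*$ (not $B$), and Remark \ref{rem65} must be applied once to $A$, once to $B^*$ (reading its $A^*$ as $B$), and once to $\rho$ (reading its $A^*$ as $\rho^*$). Checking that $B^*$ isoloid $+$ generalized Weyl for $B^*$ really does license Theorem \ref{thm41}'s hypothesis ``Browder's theorem holds for $B$'' requires the small observation, via Remark \ref{rem65}(ii)$\Leftrightarrow$(iii) applied to the operator $B$, that generalized Browder's theorem for $B^*$ and for $B$ coincide; there is no genuine analytic difficulty here, only the need to route the argument through the right instances of the already-proved results.
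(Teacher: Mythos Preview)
Your proposal is correct and follows essentially the same route as the paper: deduce from Remark \ref{rem65} that $A$ and $B$ are polaroid, apply Theorem \ref{thm46} to get $\rho$ polaroid (hence $\rho^*$ polaroid as well), and then feed $\rho$ into Remark \ref{rem65} to obtain all six equivalences. The detour through Theorem \ref{thm41} is unnecessary---the paper never invokes it here---but it does no harm.
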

\begin{proof} According to Remark \ref{rem65}, $A$ and $B$ are polaroid. Hence,
according to Theorem \ref{thm46}, $\rho\in L(L(\Y,\X))$ is polaroid. Moreover,
according to \cite[Theorem 2.8(iii)]{A1}, $\rho^*\in L(L(\Y,\X)^*)$ is polaroid.
To conclude the proof, apply Remark \ref{rem65}.
\end{proof}

\indent Note that according to Remark \ref{rem65}, the hypothesis in Theorem \ref{thm75} is equivalent to the fact that $A$ and $B^*$ are polaroid operators satisfying Weyl's theorem.
The following lemma will be useful to study $a$-Weyl's theorem.\par

\begin{lem}\label{lem105} Let $\X$ and $\Y$ be two Banach spaces and consider
$A\in L(\X)$ and $B\in L(\Y)$ two operators such that $A$ and $B^*$ are $a$-isoloid. Consider $\rho\colon L(\Y ,\X)\to L(\Y,\X)$
the generalized derivation defined by $A$ and $B$ and let $\lambda\in E^a_0 (\rho)\subseteq \hbox{ \rm iso } \sigma_a (\rho)$. 
If $\mu\in \hbox{ \rm iso } \sigma_a (A)$ and $\nu\in \hbox{ \rm iso } \sigma_a (B^*)$ are such that $\lambda=\mu-\nu$,
then $\mu\in E^a_0 (A)$ and $\nu\in  E^a_0 (B^*)$.
\end{lem}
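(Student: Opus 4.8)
The plan is to exploit the direct sum decomposition of $L(\Y,\X)$ associated with the isolated points $\mu\in\hbox{ iso }\sigma_a(A)$ and $\nu\in\hbox{ iso }\sigma_a(B^*)$, exactly as in the proof of Theorem \ref{thm3}, but now tracking the approximate point spectra and the eigenvalue conditions. First I would fix $\lambda\in E^a_0(\rho)$, so that $\lambda\in\hbox{ iso }\sigma_a(\rho)$, $0<\alpha(\rho-\lambda)<\infty$, and take $\mu\in\hbox{ iso }\sigma_a(A)$, $\nu\in\hbox{ iso }\sigma_a(B^*)$ with $\lambda=\mu-\nu$ (such a pair exists by Proposition \ref{pro6}(iv)). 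Since $A$ and $B^*$ are $a$-isoloid, $\mu\in E^a(A)$ and $\nu\in E^a(B^*)$; hence there are $x\in\X$, $x\ne 0$, and $f\in\Y^*$, $f\ne 0$, with $A(x)=\mu x$ and $B^*(f)=\nu f$. The operator $U_{x,f}\in L(\Y,\X)$ defined in Theorem \ref{thm2}(ii) then satisfies $\rho(U_{x,f})=\lambda U_{x,f}$ with $U_{x,f}\ne 0$, so $0<\alpha(\rho-\lambda)$. Thus $\mu\in E^a(A)$ and $\nu\in E^a(B^*)$ are automatic; the real content is the finite-multiplicity claims $\alpha(A-\mu)<\infty$ and $\alpha(B^*-\nu)<\infty$.

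Next I would reduce to a local statement. By Proposition \ref{pro6}(ii), $\sigma_{SF_+}(\rho)=(\sigma_{SF_+}(A)-\sigma_a(B^*))\cup(\sigma_a(A)-\sigma_{SF_+}(B^*))$, so it suffices to show that $\lambda=\mu-\nu\notin\sigma_{SF_+}(\rho)$, i.e.\ that $\rho-\lambda$ is upper semi-Fredholm; for then $\mu\notin\sigma_{SF_+}(A)$ and $\nu\notin\sigma_{SF_+}(B^*)$, giving $\alpha(A-\mu)<\infty$ and $\alpha(B^*-\nu)<\infty$, which combined with $\mu\in E^a(A)$, $\nu\in E^a(B^*)$ yields $\mu\in E^a_0(A)$ and $\nu\in E^a_0(B^*)$. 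So the goal becomes: deduce $\lambda\notin\sigma_{SF_+}(\rho)$ from $\lambda\in E^a_0(\rho)$. Since $\lambda\in\hbox{ iso }\sigma_a(\rho)$, both $\rho$ and $\rho^*$ have SVEP at $\lambda$ (SVEP at isolated points of $\sigma_a$). If I can show $\rho-\lambda$ is upper semi B-Fredholm with $\alpha(\rho-\lambda)<\infty$, then since SVEP of $\rho$ at $\lambda$ forces $\hbox{asc}(\rho-\lambda)<\infty$ (\cite[Theorem 3.16]{A}), a punctured-neighbourhood / classical argument upgrades it to upper semi-Fredholm. Concretely: $\lambda\notin\sigma_{SBF_+^-}(\rho)$ would follow from $\lambda$ being a left pole, and then with $\alpha(\rho-\lambda)<\infty$ one gets $\lambda\in\Pi^a_0(\rho)$, whence $\lambda\notin\sigma_{aw}(\rho)\supseteq\sigma_{SF_+}(\rho)$ via \cite[Corollary 2.2]{R2}.

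The cleanest route, though, is to argue directly with the decomposition. Apply the first paragraph of the proof of Theorem \ref{thm3} in the approximate-spectrum setting: corresponding to the isolated points one gets $L(\Y,\X)=L(N_{1},M_{1})\oplus\bigl(\hbox{subspaces on which }\rho-\lambda\hbox{ is bounded below}\bigr)$, where on $L(N_1,M_1)$ the restriction of $\rho-\lambda$ is $L_{(A_1-\mu)}-R_{(B_1-\nu)}$ with $\sigma_a(A_1)=\{\mu\}$ and $\sigma_a(B_1^*)=\{\nu\}$. Then $N(\rho-\lambda)$ and, more to the point, $\alpha(\rho-\lambda)$ and the ``upper semi-Fredholm defect'' of $\rho-\lambda$ are entirely carried by this single summand. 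On that summand, $\alpha$ of the restriction equals $\alpha(\rho-\lambda)<\infty$; I would then invoke the tensor/elementary-operator index computations already cited in the paper (the reasoning behind \cite[Theorem 4.2]{E} and \cite[Proposition 4.3]{BDJ}), which express the null space of $L_{A_1-\mu}-R_{B_1-\nu}$ in terms of $N(A_1-\mu)$ and $N(B_1^*-\nu)$, to conclude that $\alpha(A_1-\mu)=\alpha(A-\mu)<\infty$ and $\alpha(B_1^*-\nu)=\alpha(B^*-\nu)<\infty$.

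The main obstacle is precisely this last transfer: showing that finiteness of $\alpha(\rho-\lambda)$ forces finiteness of both $\alpha(A-\mu)$ and $\alpha(B^*-\nu)$, rather than merely of a product-type quantity. The danger is the degenerate possibility that $N(A-\mu)$ is infinite-dimensional while $N(B^*-\nu)$ is trivial (or vice versa), which would not obviously be excluded by $\alpha(\rho-\lambda)<\infty$ alone — but it is, because $B^*-\nu$ not bounded below on a space with $\nu\in\sigma_a(B^*)$, together with $\mu\in\sigma_a(A)$, produces an infinite-dimensional family of approximate eigenvectors $U_{x_n,f_m}$ for $\rho-\lambda$ in the sense that $R(\rho-\lambda)$ fails to be closed, i.e.\ $\lambda\in\sigma_{SF_+}(\rho)$, contradicting $\lambda\notin\sigma_{aw}(\rho)$ which holds since $\lambda\in E^a_0(\rho)$ and, \emph{a priori}, $\rho$ and $\rho^*$ have SVEP at the isolated point $\lambda$. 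So I would package the argument as: $\lambda\in\hbox{ iso }\sigma_a(\rho)$ gives SVEP of $\rho$ at $\lambda$; $\alpha(\rho-\lambda)<\infty$ plus SVEP and \cite[Corollary 2.2]{R2} and \cite[Theorem 3.16]{A} give $\lambda\in\Pi^a_0(\rho)$, hence $\lambda\notin\sigma_{SF_+}(\rho)$; then Proposition \ref{pro6}(ii) forces $\mu\notin\sigma_{SF_+}(A)$ and $\nu\notin\sigma_{SF_+}(B^*)$, so $\alpha(A-\mu)<\infty$ and $\alpha(B^*-\nu)<\infty$, completing the proof.
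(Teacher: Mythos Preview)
Your argument has a genuine gap at the key step. You want to deduce $\lambda\notin\sigma_{SF_+}(\rho)$ from $\lambda\in E^a_0(\rho)$, and you justify this by saying that SVEP of $\rho$ at $\lambda$ together with $\alpha(\rho-\lambda)<\infty$ and \cite[Theorem 3.16]{A}, \cite[Corollary 2.2]{R2} yield $\lambda\in\Pi^a_0(\rho)$. This is circular: \cite[Theorem 3.16]{A} has ``$\rho-\lambda$ upper semi-Fredholm'' as a \emph{hypothesis}, not a conclusion, and \cite[Corollary 2.2]{R2} likewise presupposes semi-Fredholm information. In general, an isolated point of $\sigma_a(T)$ with $0<\alpha(T-\lambda)<\infty$ need not have $R(T-\lambda)$ closed, so $E^a_0(T)\not\subseteq\sigma_a(T)\setminus\sigma_{SF_+}(T)$. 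Hence the route through Proposition \ref{pro6}(ii) cannot be completed as stated.

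The paper's proof avoids this detour entirely and is much more elementary. Since $A$ and $B^*$ are $a$-isoloid, $\mu\in E^a(A)$ and $\nu\in E^a(B^*)$; in particular there exists $0\ne f\in N(B^*-\nu)$. If $\dim N(A-\mu)=\infty$, choose linearly independent unit vectors $(x_n)\subset N(A-\mu)$; then the rank-one operators $U_{x_n,f}$ are linearly independent and lie in $N(\rho-\lambda)$, contradicting $\alpha(\rho-\lambda)<\infty$. The symmetric argument handles $\dim N(B^*-\nu)$. You actually brush against this idea in your ``main obstacle'' paragraph, but you misread what it gives: the $U_{x_n,f}$ are genuine eigenvectors of $\rho-\lambda$, so they directly force $\alpha(\rho-\lambda)=\infty$; there is no need to talk about $R(\rho-\lambda)$ failing to be closed, about $\sigma_{aw}(\rho)$, or about SVEP. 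Drop the semi-Fredholm machinery and finish with this two-line eigenvector count.
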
 
\begin{proof}Note that according to Proposition \ref{pro55}, $\rho\in L(L(\Y,\X))$ is $a$-isoloid.
In particular, according to Proposition \ref{pro6}(iv), if $\lambda\in \hbox{ iso }\sigma_a (\rho)=E^a (\rho)$,
then there are $\mu\in \hbox{ \rm iso }\sigma_a (A)=  E^a (A)$ and $\nu\in \hbox{ \rm iso }\sigma_a (B^*)= E^a (B^*)$ such that $\lambda=\mu-\nu$.
Suppose that $\lambda\in E^a_0 (\rho)$. To prove the Lemma,  it is enough to show that 
dim $N(A-\mu)$ and dim $N(B^*-\nu)$ are dinite dimensional.\par
\indent If  dim $N(A-\mu)$ is not finite dimensional, then there is a sequence of linearly  independent vectors $(x_n)_{n\in\mathbb N}\subseteq N(A-\mu)$ such that
$\parallel x_n\parallel=1$. Let $f\in N(B^*-\nu)$, $f\neq 0$ and consider the sequences of operators $(U_{x_n,f})_{n\in\mathbb N}\subset L(\Y,\X)$ (see the proof of Theorem \ref{thm2}(ii)). 
Then, it is not difficult to prove that $(U_{x_n,f})_{n\in\mathbb N}\subset L(\Y,\X)$ is a sequence of linearly independent operators such that
$(U_{x_n,f})_{n\in\mathbb N}\subset N(\rho-\lambda)$, which is impossible for $\lambda\in E^a_0 (\rho)$.\par
\indent A similar argument proves that if dim $N(B^*-\nu)$ is not finite dimensional, then $\lambda\notin E^a_0 (\rho)$.
\end{proof}

\indent Next $a$-Weyl's theorem will be considered.\par
\pagestyle{myheadings} \markboth{  }{ \hskip5truecm \rm E. Boasso and M. Amouch  }
\begin{thm}\label{thm185} Let $\X$ and $\Y$ be two Banach spaces and consider two operators
$A\in L(\X)$ and $B\in L(\Y)$ such that $A$ and $B^*$ are $a$-isoloid and $a$-Weyl's theorem holds for 
$A$ and $B^*$. If $\rho\colon L(\Y,\X)\to L(\Y,\X)$ is the generalized derivation defined by
$A$ and $B$, then the following statements are equivalent.\par
\noindent \rm (i) \it $a$-Weyl's theorem holds for $\rho\in L(L(\Y,\X))$.\par
\noindent \rm (ii) \it $a$-Browder's theorem holds for $\rho\in  L(L(\Y,\X))$.
\end{thm}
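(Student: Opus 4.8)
The plan is to prove the two implications separately; only $(ii)\Rightarrow(i)$ uses the standing hypotheses on $A$ and $B$. For $(i)\Rightarrow(ii)$ I would invoke the general principle that $a$-Weyl's theorem implies $a$-Browder's theorem: if $\lambda\in\sigma_a(\rho)\setminus\sigma_{aw}(\rho)$, then $(i)$ puts $\lambda\in E^a_0(\rho)\subseteq\hbox{iso }\sigma_a(\rho)$, so $\rho$ has the SVEP at $\lambda$; since moreover $\rho-\lambda$ is upper semi-Fredholm with $\ind(\rho-\lambda)\le 0$, \cite[Theorem 3.16]{A} gives $\asc(\rho-\lambda)<\infty$, whence $\lambda\notin\sigma_{ab}(\rho)$, so $\sigma_{aw}(\rho)=\sigma_{ab}(\rho)$, which by \cite[Corollary 2.2]{R2} is $(ii)$. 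Applied to $A$ and $B^*$, the same argument shows that the hypothesis that $a$-Weyl's theorem holds for $A$ and $B^*$ already forces $a$-Browder's theorem for $A$ and $B^*$, i.e. $\sigma_{aw}(A)=\sigma_{ab}(A)$ and $\sigma_{aw}(B^*)=\sigma_{ab}(B^*)$; this will be used in the converse.

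For $(ii)\Rightarrow(i)$ I would first reduce $(i)$ to one inclusion. Under $(ii)$, \cite[Corollary 2.2]{R2} gives $\sigma_{aw}(\rho)=\sigma_{ab}(\rho)=\sigma_a(\rho)\setminus\Pi^a_0(\rho)$; since $\Pi^a_0(\rho)\subseteq E^a_0(\rho)$ always holds, the equality $\sigma_{aw}(\rho)=\sigma_a(\rho)\setminus E^a_0(\rho)$ of $(i)$ is equivalent to $E^a_0(\rho)\subseteq\Pi^a_0(\rho)$, and hence to $E^a_0(\rho)\cap\sigma_{aw}(\rho)=\emptyset$. So I would fix $\lambda\in E^a_0(\rho)\subseteq\hbox{iso }\sigma_a(\rho)$ and aim at $\lambda\notin\sigma_{ab}(\rho)$. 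By Proposition \ref{pro6}(i) one can write $\lambda=\mu-\nu$ with $\mu\in\sigma_a(A)$ and $\nu\in\sigma_a(B^*)$, and by Proposition \ref{pro6}(iii), because $\lambda\notin\hbox{acc }\sigma_a(\rho)$, \emph{every} such pair satisfies $\mu\in\hbox{iso }\sigma_a(A)$ and $\nu\in\hbox{iso }\sigma_a(B^*)$. Hence Lemma \ref{lem105} applies to each such pair and yields $\mu\in E^a_0(A)$ and $\nu\in E^a_0(B^*)$. But $\mu\in E^a_0(A)$ forces $\mu\notin\sigma_{aw}(A)=\sigma_{ab}(A)$ and $\nu\in E^a_0(B^*)$ forces $\nu\notin\sigma_{aw}(B^*)=\sigma_{ab}(B^*)$ (by the first paragraph), so $\mu\notin\sigma_{ab}(A)$ and $\nu\notin\sigma_{ab}(B^*)$ for \emph{every} decomposition $\lambda=\mu-\nu$ with $\mu\in\sigma_a(A)$, $\nu\in\sigma_a(B^*)$. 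Feeding this into the formula $\sigma_{ab}(\rho)=(\sigma_{ab}(A)-\sigma_a(B^*))\cup(\sigma_a(A)-\sigma_{ab}(B^*))$ of Proposition \ref{pro6}(v) rules out $\lambda\in\sigma_{ab}(\rho)$, so $\lambda\notin\sigma_{ab}(\rho)=\sigma_{aw}(\rho)$, and $(i)$ follows.

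The step I expect to be the main obstacle is the global bookkeeping in the converse: in order to deduce $\lambda\notin\sigma_{ab}(\rho)$ from Proposition \ref{pro6}(v) one must control simultaneously \emph{all} of the (finitely many, by \cite[Lemma 4.1]{E}) ways of writing a fixed isolated point $\lambda$ of $\sigma_a(\rho)$ as a difference $\mu-\nu$ with $\mu\in\sigma_a(A)$ and $\nu\in\sigma_a(B^*)$. Proposition \ref{pro6}(iii) forces every such pair into $\hbox{iso }\sigma_a(A)\times\hbox{iso }\sigma_a(B^*)$, and Lemma \ref{lem105} (whose hypotheses are met since $A$ and $B^*$ are $a$-isoloid) transports membership in $E^a_0$ from $\rho$ down to each coordinate; these two facts, together with Proposition \ref{pro6}(v), carry the whole argument. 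It is also worth keeping in mind that $(i)$ and $(ii)$ are here the non-generalized $a$-Weyl and $a$-Browder theorems, so that $\sigma_{aw}$, $\sigma_{ab}$, $E^a_0$ and $\Pi^a_0$ (rather than their upper semi B-Fredholm analogues) are the relevant sets, which is why Lemma \ref{lem105}, stated in terms of $E^a_0$, is the appropriate tool.
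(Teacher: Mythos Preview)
Your proof is correct and follows essentially the same route as the paper: both directions rest on \cite[Corollary 2.2]{R2}, and for $(ii)\Rightarrow(i)$ both arguments reduce to $E^a_0(\rho)\subseteq\Pi^a_0(\rho)$, pick $\lambda\in E^a_0(\rho)$, pass to pairs $(\mu,\nu)\in\hbox{iso }\sigma_a(A)\times\hbox{iso }\sigma_a(B^*)$ via Proposition~\ref{pro6}, apply Lemma~\ref{lem105} to get $\mu\in E^a_0(A)$, $\nu\in E^a_0(B^*)$, and then use $a$-Weyl for $A$ and $B^*$ to conclude $\lambda\notin\sigma_{aw}(\rho)=\sigma_{ab}(\rho)$. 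The only cosmetic difference is that the paper closes with Theorem~\ref{thm42}(iii) (the formula for $\sigma_{aw}(\rho)$, available since $(ii)$ holds and $a$-Browder holds for $A,B^*$), whereas you close with Proposition~\ref{pro6}(v) (the formula for $\sigma_{ab}(\rho)$) and then invoke $\sigma_{ab}(\rho)=\sigma_{aw}(\rho)$; since $\sigma_{aw}(A)=\sigma_{ab}(A)$ and $\sigma_{aw}(B^*)=\sigma_{ab}(B^*)$ under the hypotheses, these two endings are interchangeable. Your explicit emphasis that \emph{every} decomposition $\lambda=\mu-\nu$ must be controlled is a welcome clarification that the paper leaves implicit.
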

\begin{proof}  According to \cite[Corollary 2.2]{R2} and \cite[Corollary 2.4]{R2}, statement (i) implies
statement (ii).\par
\indent On the other hand, since $\Pi^a_0(\rho)\subseteq E^a_0 (\rho)$, if statement (ii) holds,
then to prove statement (i), it  is enough to show that $E^a_0 (\rho)\subseteq \Pi^a_0(\rho)$.
Let $\lambda\in E^a_0 (\rho)\subseteq \hbox{ iso } \sigma_a (\rho)$. Then,
according to Proposition \ref{pro6}(iv), there exist $\mu\in  \hbox{ iso } \sigma_a (A)$ and 
$\nu\in  \hbox{ iso } \sigma_a (B^*)$ such that $\lambda=\mu-\nu$. In particular,
according to Lemma  \ref{lem105}, $\mu\in E^a_0 (A)=\sigma_a (A)\setminus \sigma_{aw} (A)$ and 
$\nu\in E^a_0 (B^*)=\sigma_a (B^*)\setminus \sigma_{aw} (B^*)$. Consequently,
according to Theorem \ref{thm42}(iii), $\lambda\notin \sigma_{aw} (\rho)=\sigma_{ab} (\rho)$.
Hence, $\lambda\in \Pi^a_0 (\rho)$ (\cite[Corollary 2.2]{R2}).
\end{proof}

\indent In the following theorem generalized $a$-Weyl's theorem for 
generalized derivations will be studied.\par

\begin{thm}\label{thm85}Let $\X$ and $\Y$ be two Banach spaces and consider two operators
$A\in L(\X)$ and $B\in L(\Y)$ such that $A$ and $B^*$ are $a$-isoloid and generalized $a$-Weyl's theorem holds for 
$A$ and $B^*$. If $\rho\colon L(\Y,\X)\to L(\Y,\X)$ is the generalized derivation defined by
$A$ and $B$, then the following statements are equivalent.\par
\noindent \rm (i) \it generalized $a$-Weyl's theorem holds for $\rho\in L(L(\Y,\X))$.\par
\noindent \rm (ii) \it generalized $a$-Browder's theorem holds for $\rho\in  L(L(\Y,\X))$ and 
$\sigma_{SBF^-_+ }(\rho)= (\sigma_a(A)-\sigma_{SBF^-_+ }(B^*))\cup (\sigma_{SBF^-_+ }(A)-\sigma_a (B^*))$.
\end{thm}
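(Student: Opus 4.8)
The plan is to mimic, at the level of generalized $a$-Weyl's theorem, the proof strategy already used for Theorem~\ref{thm185}, replacing the roles of $\sigma_{aw}$, $\sigma_{ab}$, $E^a_0$ and $\Pi^a_0$ by their ``generalized'' counterparts $\sigma_{SBF^-_+}$, $\sigma_{SBF^-_+}$-Browder data, $E^a$ and $\Pi^a$. The implication $(\mathrm{i})\Rightarrow(\mathrm{ii})$ should be immediate: if generalized $a$-Weyl's theorem holds for $\rho$, then in particular generalized $a$-Browder's theorem holds for $\rho$ (since $\Pi^a(\rho)\subseteq E^a(\rho)$ always, so $\sigma_{SBF^-_+}(\rho)=\sigma_a(\rho)\setminus E^a(\rho)\subseteq \sigma_a(\rho)\setminus\Pi^a(\rho)$, and the reverse inclusion is automatic by \cite[Theorem 2.8]{B6}); the displayed identity for $\sigma_{SBF^-_+}(\rho)$ then follows because generalized $a$-Weyl's theorem gives $\sigma_{SBF^-_+}(\rho)=\sigma_a(\rho)\setminus E^a(\rho)$, and $E^a(\rho)$ can be computed from $E^a(A)=\hbox{iso }\sigma_a(A)$ and $E^a(B^*)=\hbox{iso }\sigma_a(B^*)$ (using that $A$ and $B^*$ are $a$-isoloid and satisfy generalized $a$-Weyl) together with Proposition~\ref{pro6}(iv) for the isolated points of $\sigma_a(\rho)$.

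For the substantive direction $(\mathrm{ii})\Rightarrow(\mathrm{i})$, I would argue as follows. Assume generalized $a$-Browder's theorem holds for $\rho$ and that the displayed spectral identity holds. Since $\Pi^a(\rho)\subseteq E^a(\rho)$ is automatic, it suffices to prove $E^a(\rho)\subseteq\Pi^a(\rho)$, and then generalized $a$-Browder's theorem ($\sigma_{SBF^-_+}(\rho)=\sigma_a(\rho)\setminus\Pi^a(\rho)$) upgrades to generalized $a$-Weyl's theorem. So fix $\lambda\in E^a(\rho)\subseteq\hbox{iso }\sigma_a(\rho)$. By Proposition~\ref{pro6}(iv) there are $\mu\in\hbox{iso }\sigma_a(A)$ and $\nu\in\hbox{iso }\sigma_a(B^*)$ with $\lambda=\mu-\nu$; since $A$ and $B^*$ are $a$-isoloid, $\mu\in E^a(A)$ and $\nu\in E^a(B^*)$, and since generalized $a$-Weyl's theorem holds for $A$ and $B^*$ we get $\mu\notin\sigma_{SBF^-_+}(A)$ and $\nu\notin\sigma_{SBF^-_+}(B^*)$; equivalently (generalized $a$-Browder for $A$, $B^*$) $\mu\in\Pi^a(A)$ and $\nu\in\Pi^a(B^*)$. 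Now invoke the hypothesized identity for $\sigma_{SBF^-_+}(\rho)$: it shows $\lambda=\mu-\nu\notin\sigma_{SBF^-_+}(\rho)$, because $\mu\notin\sigma_{SBF^-_+}(A)$, $\nu\notin\sigma_{SBF^-_+}(B^*)$ and, for every other representation $\lambda=\mu'-\nu'$ with $\mu'\in\sigma_a(A)$, $\nu'\in\sigma_a(B^*)$, the isolation of $\lambda$ in $\sigma_a(\rho)$ together with Proposition~\ref{pro6}(iii)--(iv) forces $\mu'\in\hbox{iso }\sigma_a(A)$, $\nu'\in\hbox{iso }\sigma_a(B^*)$, hence again $\mu'\notin\sigma_{SBF^-_+}(A)$, $\nu'\notin\sigma_{SBF^-_+}(B^*)$. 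Therefore $\lambda\in\sigma_a(\rho)\setminus\sigma_{SBF^-_+}(\rho)=\Pi^a(\rho)$ by generalized $a$-Browder's theorem, as required.

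The main obstacle I anticipate is the bookkeeping over \emph{all} decompositions $\lambda=\mu'-\nu'$ in the last step: one needs that whenever $\lambda\in\hbox{iso }\sigma_a(\rho)$ and $\mu'-\nu'=\lambda$ with $\mu'\in\sigma_a(A)$, $\nu'\in\sigma_a(B^*)$, both $\mu'$ and $\nu'$ are necessarily isolated in $\sigma_a(A)$ and $\sigma_a(B^*)$ respectively. This is exactly the content of Proposition~\ref{pro6}(iii)--(iv): if $\mu'\in\hbox{acc }\sigma_a(A)$ then $\mu'-\nu'\in\hbox{acc }\sigma_a(\rho)$, contradicting $\lambda\in\hbox{iso }\sigma_a(\rho)$, and symmetrically for $\nu'$. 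Once this is in place the rest is a routine translation of the $a$-isoloid argument. A secondary point to handle carefully is that the hypothesized identity in (ii) is stated with $\sigma_a$ on one factor and $\sigma_{SBF^-_+}$ on the other (the analogue of Remark~\ref{rem1}(iii) and Proposition~\ref{pro6}(ii),(v)); I would simply quote it as given rather than re-derive it, since it is part of the assumption of the direction being proved.
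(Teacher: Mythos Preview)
Your proposal is correct and follows essentially the same route as the paper: the paper also deduces generalized $a$-Browder's theorem from (i) via \cite[Corollary 3.3]{B6}, obtains the spectral identity by showing both inclusions using Proposition~\ref{pro6}(iii)--(iv) together with $\hbox{iso }\sigma_a(A)=E^a(A)=\sigma_a(A)\setminus\sigma_{SBF^-_+}(A)$ (and similarly for $B^*$), and for (ii)$\Rightarrow$(i) reduces to $E^a(\rho)\subseteq\Pi^a(\rho)$ exactly as you do. Your explicit treatment of \emph{all} decompositions $\lambda=\mu'-\nu'$ via Proposition~\ref{pro6}(iii)--(iv) is precisely what the paper abbreviates by ``as in the first paragraph of the proof.''
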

\begin{proof} Suppose that statement (i) holds. Then, generalized $a$-Browder's theorem holds for $\rho$ (\cite[Corollary 3.3]{B6}).\par

Let $\lambda\in \sigma_a (\rho)\setminus \sigma_{SBF_+^-} (\rho)= E^a (\rho)=\hbox{ iso }\sigma_a (\rho)$.
According to Proposition \ref{pro6}(iv), $\lambda=\mu-\nu$, where $\mu\in  \hbox{ iso }\sigma_a (A)$ and $\nu\in \hbox{ iso }\sigma_a (B^*)$.
However, since for every $\mu$ and $\nu$ such that $\lambda=\mu-\nu$, $\mu\hbox{ iso }\sigma_a (A)= E^a (A)=\sigma_a (A)\setminus \sigma_{SBF_+^-} (A)$
and $\nu\in \hbox{ iso }\sigma_a (B^*)= E^a (B^*)=\sigma_a (B^*)\setminus \sigma_{SBF_+^-} (B^*)$,
 $\lambda \in \sigma_a (\rho)\setminus ( (\sigma_a(A)-\sigma_{SBF^-_+ }(B^*))\cup (\sigma_{SBF^-_+ }(A)-\sigma_a (B^*)))$.
Consequently, $(\sigma_a(A)-\sigma_{SBF^-_+ }(B^*))\cup (\sigma_{SBF^-_+ }(A)-\sigma_a (B^*))\subseteq \sigma_{SBF^-_+ }(\rho)$.\par

\indent Next suppose that $\lambda\in \sigma_a (\rho)\setminus ((\sigma_a(A)-\sigma_{SBF^-_+ }(B^*))\cup (\sigma_{SBF^-_+ }(A)-\sigma_a (B^*)))$.
Note that according to  \cite[Lemma 2.12]{B6}, acc $\sigma_a (A)\subseteq  \sigma_{SBF^-_+ }(A)$ and
acc $\sigma_a (B^*)\subseteq  \sigma_{SBF^-_+ }(B^*)$. Thus, according to Proposition \ref{pro6}(iii), $\lambda\in $ iso $\sigma_a (\rho)= E^a (\rho)
= \sigma_a (\rho)\setminus \sigma_{SBF_+^-} (\rho)$. Hence, $\sigma_{SBF_+^-} (\rho)\subseteq (\sigma_a(A)-\sigma_{SBF^-_+ }(B^*))\cup (\sigma_{SBF^-_+ }(A)-\sigma_a (B^*))$. \par

\indent Suppose that statement (ii) holds. Since $\Pi^a (\rho)\subseteq E^a (\rho)$, according to \cite[Corollary 3.2]{B6},
it is enough to prove that $E^a (\rho)\subseteq \Pi^a (\rho)$. Let $\lambda\in E^a (A)$. Then, 
according to Proposition \ref{pro6}(iv), there exist  $\mu\hbox{ iso }\sigma_a (A)$ and  $\nu\in \hbox{ iso }\sigma_a (B^*)$
such that $\lambda=\mu-\nu$. However, as in the first paragraph of the proof, $\mu\in \sigma_a (A)\setminus \sigma_{SBF_+^-} (A)$
and $\nu\in\sigma_a (B^*)\setminus \sigma_{SBF_+^-} (B^*)$. Therefore, $\lambda\in\sigma_a (\rho) \setminus \sigma_{SBF_+^-} (\rho)=\Pi^a (\rho)$.
\end{proof}

\vskip.5truecm
\noindent Enrico  Boasso\par
\noindent e-mail: enrico\_odisseo@yahoo.it\par
\vskip.3truecm
\noindent Mohamed  Amouch\par
\noindent Department of Mathematics and Informatic\par
\noindent Chouaib Doukkali Universty\par
\noindent Faculty of Sciences\par
\noindent Eljadida\par
\noindent Morocco\par
\noindent e-mail: amouch.m@ucd.ac.ma

\end{document}